\documentclass[a4paper,12pt]{amsart}

\pdfoutput=1
\usepackage{setspace}
\usepackage[left=25mm,head=30mm,bottom=30mm,foot=20mm,right=25mm]{geometry}
\usepackage{amsmath,amsthm,amsfonts,amssymb,mathtools,mathrsfs,url,bm,enumitem,stackengine}
\usepackage{newtxtext}
\usepackage[varvw]{newtxmath}
\allowdisplaybreaks
\usepackage{hyperref}
\hypersetup{
	colorlinks=true,
	linkcolor=blue,
	citecolor=red,%magenta,
}
\usepackage{appendix}

\mathtoolsset{showonlyrefs=true}
\theoremstyle{definition}
\newtheorem{definition}{Definition}[section]

\newtheorem{remark}{Remark}[section]

\newtheorem{problem}{Problem}[section]

\theoremstyle{plain}

\newtheorem{theorem}{Theorem}[section]

\newtheorem{corollary}{Corollary}[section]

\newtheorem{proposition}{Proposition}[section]

\newtheorem{lemma}{Lemma}[section]
\def\R{\mathbb{R}}

\renewcommand{\phi}{\varphi}
\newcommand\restr[2]{{% we make the whole thing an ordinary symbol
		\left.\kern-\nulldelimiterspace % automatically resize the bar with \right
		#1 % the function
		\vphantom{\big|} % pretend it's a little taller at normal size
		\right|_{#2} % this is the delimiter
}}

\let\phi\varphi
\let\epsilon\varepsilon

\usepackage{tikz-cd}
\tikzset{
	symbol/.style={
		draw=none,
		every to/.append style={
			edge node={node [sloped, allow upside down, auto=false]{$#1$}}}
	}
}

\def\R{\mathbb R}

\def\proscal3#1#2{<\!\!#1, #2\!\!>_{_{{\hskip-4pt\R^{3}}}}}

\def\vu{ u }
\def\vn{ \nabla }

\def\l2{L^2(\R^{n})}
\def\L2{L^2(\R^{2n})}
\def\supp{\operatorname{supp}}

\def\mat22#1#2#3#4{\begin{pmatrix}#1&#2\\ #3&#4\end{pmatrix}}

\def\mPl{\mathcal{P}^{\text{log}}} 
\def\Rt{\mathbb{R}^d}

\begin{document}

\title[Liouville Theorems Above the Critical ${9/2}$ Threshold]{Liouville Theorems Above the Critical $9/2$ Threshold for Stationary Navier-Stokes Equations}

	\author{Gast\'{o}n Vergara-Hermosilla}
\address{Institute for Theoretical Sciences, Westlake University, 310030 Hangzhou, Zhejiang, People's
Republic of China}
\email{gaston.v-h@outlook.com}	

	\maketitle

\begin{abstract}
We establish new Liouville-type theorems for the stationary Navier--Stokes equations in $\mathbb{R}^3$.
A central open problem in this context is whether the classical $L^{9/2}(\mathbb{R}^3)$ condition of G.~Galdi can be relaxed. In this note we show that this global integrability requirement can indeed be weakened.
More precisely, we prove that triviality already follows under assumptions of the form $u \in L^{9/2 + \varepsilon(\cdot)}(\mathbb{R}^3)$, where $\varepsilon(\cdot)>0$. 
As a consequence, we obtain a localized Liouville theorem: it is sufficient to impose this  integrability condition only at infinity, with no additional assumptions on the behavior of $u$ inside a compact set. This highlights that the mechanism enforcing triviality is purely asymptotic.
Our approach relies on a general uniqueness result in the framework of Lebesgue spaces with variable exponents, which naturally captures the coexistence of different integrability regimes across the domain.
\end{abstract}

%---------------------------------------------------
\section{Introduction and presentation of the results}
In this paper, we study the stationary Navier--Stokes equations in $\mathbb{R}^3$:
\begin{equation}\label{SNS}
-\Delta u + u \cdot \nabla u + \nabla P = 0, \quad 
\nabla \cdot u = 0,
\end{equation}
where $u$ denotes the velocity field and $P$ stands for the associated pressure. 
It is well known that solutions $(u,P)$ to \eqref{SNS} can be constructed in the spaces $\big(\dot{H}^1(\mathbb{R}^3), \dot{H}^{1/2}(\mathbb{R}^3)\big)$ (see, for instance, \cite[Theorem 16.2]{lemarie2016navier}). However, uniqueness in this class remains an open and dificuly problem. 
This motivates the following question, originally raised in \cite[Remark X.9.4]{galdi2011introduction} and \cite{Ser2016}.

\begin{problem}\label{Conjecture1}
Prove that any solution $u$ of \eqref{SNS} satisfying
\begin{equation}\label{Conjecture}
u \in \dot{H}^1(\mathbb{R}^3)
\qquad \text{and} \qquad
u(x) \to 0 \ \text{as } |x| \to +\infty,
\end{equation}
is identically equal to zero.
\end{problem}

In the following we briefly review some of the main progress on this problem.  By the Sobolev embedding theorem, any $u \in \dot{H}^1(\mathbb{R}^3)$ belongs to $L^6(\mathbb{R}^3)$, which already imposes a certain decay at infinity. However, this information alone does not seem sufficient to deduce  the triviality of the solution.
Over the years, several partial results have been obtained toward Problem \ref{Conjecture1}, showing that additional integrability or structural assumptions enforce $u \equiv 0$. One of the first results  about it is due to G.~Galdi \cite{galdi2011introduction}, who proved that $u \in L^{9/2}(\mathbb{R}^3)$ implies $u \equiv 0$. This condition was later relaxed by D.~Chae and J.~Wolf \cite{ChaeWolf}, who established that the weaker assumption
\[
\int_{\mathbb{R}^3} |u(x)|^{9/2} \bigl[\ln(2 + |u(x)|^{-1})\bigr]^{-1} dx < +\infty
\]
still guarantees $u \equiv 0$, providing a logarithmic improvement of Galdi’s result.
More recently, N.~Lerner in \cite{Lerner26} pointed that the global $L^{9/2}$ assumption can be relaxed by distinguishing low and high frequencies. Concretely, he proved that it is enough to require $u_{[0]} \in L^{9/2}(\R^3)$, where $u_{[0]}$ denotes the projection of $u$ onto the subspace of vector fields whose Fourier support contains a neighborhood of the origin.
  In a different direction, H.~Kozono, Y.~Terasawa, and Y.~Wakasugi proved in \cite{Kozonoetal}  that if the weak $L^{9/2}$ norm of $u$ satisfies
\[
\|u\|_{L^{9/2,\infty}} \le \delta \, (\nu \|\mathrm{curl}\, u\|_{L^2}^2)^{1/3}
\]
for sufficiently small $\delta$, then $u \equiv 0$. This result was later extended by G.~Seregin and W.~Wang in \cite{Sereginetwang}.
An alternative approach involves conditions on the Laplacian of $u$. Considering this approach, D.~Chae  showed  in \cite{chae14} that $\Delta u \in L^{6/5}(\mathbb{R}^3)$ already implies $u \equiv 0$. Now, some structural assumptions can also be used: in \cite{Seregin16}  G.~Seregin proved that if $u = \mathrm{curl}\, w$ with $w \in \mathrm{BMO}(\mathbb{R}^3)$, then $u$ must vanish. 

In this paper, we establish a new Liouville-type result that improves upon the classical $L^{9/2}$ condition. More precisely, we show that the triviality of solutions already follows from a weaker integrability assumption of the form $u \in L^{\frac{9}{2}+\varepsilon}(\mathbb{R}^3)$, with a spatially varying exponent. To the best of the author's knowledge, this provides the first result of this type. Our first main theorem is the following.

\begin{theorem}\label{thmpexplicit}
 Let $u \in \dot{H}^1(\mathbb{R}^3)$ be a   solution to \eqref{SNS} and $R_0>3/2$ fixed. Let $\varepsilon(\cdot)$ be a scalar function defined by  $\varepsilon(x)=\frac 32 $ for $|x|< R_0$ and $\varepsilon(x)=  \frac{3}{2} \frac{R_0}{|x|},$ for $|x| \geq  R_0.$ 
If in addition we assume  $u \in L^{\frac{9}{2}+\varepsilon(\cdot)}(\mathbb{R}^3)$, then $u \equiv 0$.
\end{theorem}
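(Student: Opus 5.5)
The plan is to run Galdi's cut-off energy argument and replace the fixed exponent $9/2$ by the exponent $p(x):=\frac92+\varepsilon(x)$. The key point is that on a dyadic annulus at scale $R$ the exponent differs from $9/2$ only by $O(R_0/R)$. That loss turns the Hölder volume factor into $R^{O(1/R)}$, which is bounded, so only a \emph{tail} of the modular of $u$ needs to be small.

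First I record the regularity of $p$. It is bounded, $\frac92\le p\le 6$. It is Lipschitz, with $|\nabla p|\lesssim R_0/|x|^2$ for $|x|\ge R_0$. It satisfies the log-Hölder decay condition $|p(x)-\frac92|\le \frac{3R_0}{2|x|}\lesssim 1/\ln(e+|x|)$, so $p\in\mPl$. I will use the standard fact that Calderón--Zygmund operators, in particular the Riesz transforms, are bounded on $L^{q(\cdot)}$ whenever $q\in\mPl$ and $1<q^-\le q^+<\infty$. Since $u\in\dot H^1\subset L^6$ and $u\in L^{p(\cdot)}$, we have $|u|^2\in L^{p(\cdot)/2}$. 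Applying this to $P=\sum_{i,j}R_iR_j(u_iu_j)$ gives $P\in L^{p(\cdot)/2}(\R^3)$. Interior regularity makes $(u,P)$ smooth, so the local energy identity holds.

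Next, take $\phi_R(x)=\phi(x/R)$ with $\phi=1$ on $B_1$ and $\supp\phi\subset B_2$. Testing \eqref{SNS} with $u\phi_R$ gives
\[
\int |\nabla u|^2\phi_R \le \frac{C}{R^2}\int_{A_R}|u|^2+\frac{C}{R}\int_{A_R}\big(|u|^3+|P||u|\big),
\]
where $A_R=\{R<|x|<2R\}$. The first term tends to zero by Hölder with $L^6$, since it is bounded by $C\|u\|_{L^6(A_R)}^2$. For the cubic and pressure terms, set $p_R=\min_{A_R}p=\frac92+\frac{3R_0}{4R}$ for $R\ge R_0$. Hölder on $A_R$ gives
\[
\frac1R\int_{A_R}|u|^3+|P||u|\lesssim R^{-1}|A_R|^{1-3/p_R}\Big(\|u\|_{L^{p_R}(A_R)}^3+\|P\|_{L^{p_R/2}(A_R)}\|u\|_{L^{p_R}(A_R)}\Big).
\]
We have $R^{-1}|A_R|^{1-3/p_R}\sim R^{2-9/p_R}$, and $2-9/p_R=O(1/R)$, so this prefactor is bounded uniformly in $R$. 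It therefore suffices to show $\|u\|_{L^{p_R}(A_R)}\to0$ and that $\|P\|_{L^{p_R/2}(A_R)}$ stays bounded (or also tends to zero).

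The main obstacle is comparing the constant-exponent integral $\int_{A_R}|u|^{p_R}$ with the variable modular $\int_{A_R}|u|^{p(x)}$. On $A_R$ we have $0\le p(x)-p_R\le C/R$, and I split according to the size of $|u|$.
\begin{itemize}
\item Where $|u|\ge R^{-1}$: $|u|^{p_R}=|u|^{p(x)}|u|^{p_R-p(x)}\le R^{C/R}|u|^{p(x)}\le C|u|^{p(x)}$.
\item Where $|u|<R^{-1}$: this part contributes at most $R^{-p_R}|A_R|\lesssim R^{3-9/2}\to0$.
\end{itemize}
Hence
\[
\int_{A_R}|u|^{p_R}\le C\int_{|x|>R}|u|^{p(x)}\,dx+o(1)\to0,
\]
because the modular of $u$ is finite. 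The same splitting applied to $P$ with exponent $p/2$ gives $\int_{A_R}|P|^{p_R/2}\le C\int_{|x|>R}|P|^{p(x)/2}+o(1)$, which tends to zero since $P\in L^{p(\cdot)/2}$.

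Letting $R\to\infty$ then gives $\nabla u=0$. Together with $u\in L^6$ this forces $u\equiv0$. The delicate points to check are the uniformity of the constants in the splitting step, which rests on $R^{C/R}\to1$, and the applicability of the variable-exponent Calderón--Zygmund theorem. Because $p$ is in fact Lipschitz and tends to $9/2$ at the log-Hölder rate, the latter is standard.
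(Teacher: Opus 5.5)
Your proof is essentially correct but has one step that fails as written, in the pressure term; the fix is easy. The $|u|^2$ term (via $L^6$) and the cubic term are handled correctly: you use constant-exponent H\"older at $p_R=\inf_{A_R}p$ and then compare the modulars by splitting at $|u|=R^{-1}$.

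\textbf{The gap.} The ``same splitting'' applied to $P$ with exponent $p/2$ does not work. If you split at $|P|=R^{-1}$, the small-values part contributes up to $R^{-p_R/2}|A_R|\sim R^{3-p_R/2}$. Since $p_R/2\to 9/4<3$, this grows like $R^{3/4}$. So you get neither $\int_{A_R}|P|^{p_R/2}\to0$ nor even boundedness. The splitting only yields $\|P\|_{L^{p_R/2}(A_R)}\lesssim R^{6/p_R-1}\approx R^{1/3}$, and this cannot be beaten by $\|u\|_{L^{p_R}(A_R)}\to 0$, whose rate is unquantified.

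\textbf{The fix.} Calibrate the threshold to the exponent: split at $|P|=R^{-2}$. Any $R^{-k}$ with $k>4/3$ works, and so does $e^{-R}$.
\begin{itemize}
\item On $\{|P|\ge R^{-2}\}$ you still have $|P|^{(p_R-p(x))/2}\le R^{p(x)-p_R}\le R^{C/R}$, which is uniformly bounded.
\item The set $\{|P|<R^{-2}\}$ contributes at most $R^{-p_R}|A_R|\lesssim R^{-3/2}$.
\end{itemize}
With this change, $\int_{A_R}|P|^{p_R/2}\le C\int_{|x|>R}|P|^{p(x)/2}\,dx+o(1)\to0$, and your argument closes.

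\textbf{Comparison with the paper.} The paper never meets this issue. It deduces the statement from Theorem~\ref{thm:over92}, checking that $p=\frac92+\varepsilon$ is Lipschitz, radially decreasing, equal to $6$ on $B(0,R_0)$, and satisfies $|p-\frac92|=\frac{3R_0}{2}|x|^{-1}$ with $\frac32R_0<R_0^2$. There it applies the variable-exponent H\"older inequality on $\mathcal{C}_R$ directly.
\begin{itemize}
\item All the volume loss goes into $\|\nabla\phi_R\|_{L^{r(\cdot)}(\mathcal{C}_R)}\lesssim R^{-1}|\mathcal{C}_R|^{1/r^-_{\mathcal{C}_R}}$ via Lemma~\ref{PropositionLpplusminus}. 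This produces the bounded factor $R^{2-9/p^+_{\mathcal{C}_R}}=1+O(\ln R/R)$.
\item The pressure enters only through $\|P\|_{L^{p(\cdot)/2}(\mathcal{C}_R)}\le\|P\|_{L^{p(\cdot)/2}(\mathbb{R}^3)}\lesssim\|u\|^2_{L^{p(\cdot)}(\mathbb{R}^3)}$.
\item Smallness comes solely from $\|u\|_{L^{p(\cdot)}(\mathcal{C}_R)}\to0$.
\end{itemize}

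Your route uses no variable-exponent norms beyond the Riesz-transform bound on $L^{p(\cdot)/2}$. That is more elementary, and it makes the mechanism ($R^{O(1/R)}$ bounded) very transparent. The price is that you must convert the variable modular into a fixed-exponent norm on each annulus. That conversion has to be tuned separately for $p$ and for $p/2$, and the pressure is exactly where it broke.
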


 \begin{remark}
As mentioned above, the problem would follow immediately if the
information provided by the Sobolev embedding could be upgraded to a
global $L^6(\mathbb{R}^3)$ condition strong enough to imply the
triviality of the solution. However, establishing such a result in the
whole space $\R^3$ remains an open problem. 
Theorem \ref{thmpexplicit} provides an alternative perspective; instead of requiring an uniform $L^6$ condition, we impose this level of integrability only on a fixed bounded region (since $9/2 + 3/2 = 6$ on $B(0,R_0)$), while allowing the exponent to decrease continuously toward the  value $9/2$ at infinity. In this way, the result interpolates between the Sobolev regime near the origin and the ``{\it critical}" regime at infinity.
\end{remark}

As a direct consequence of our first theorem, we obtain a Liouville-type result requiring $u \in L^{9/2 + \varepsilon(\cdot)}$ only outside an open set containing the origin. The result reads as follows:
\begin{corollary}\label{corollary-localized-assumption}
 Let $u \in \dot{H}^1(\mathbb{R}^3)$ be a   solution to \eqref{SNS} and $R_0>3/2$ fixed. Let $\overline \varepsilon(x)=  \frac  3 2 \frac{R_0}{|x|}$ be a scalar function defined on  $\{ |x| \geq  R_0 \}.$ 
If in addition, 
\[
\mathbf{1}_{\ \{ |x| \geq  R_0 \}  } \, u \in L^{9/2 + \overline \varepsilon(\cdot)}(\mathbb{R}^3),
\]
then $u \equiv 0$.
\end{corollary}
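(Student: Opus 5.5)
The plan is to reduce Corollary \ref{corollary-localized-assumption} directly to Theorem \ref{thmpexplicit}. Let $p(\cdot)=\frac92+\varepsilon(\cdot)$ be the variable exponent of the theorem. By definition, $p(x)=6$ on $B(0,R_0)$ and $p(x)=\frac92+\overline\varepsilon(x)$ on $\{|x|\geq R_0\}$, so the two exponents coincide outside the ball. It therefore suffices to show $u\in L^{p(\cdot)}(\mathbb{R}^3)$, since the theorem then yields $u\equiv 0$.

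For this, split $u=\mathbf{1}_{\{|x|<R_0\}}u+\mathbf{1}_{\{|x|\geq R_0\}}u$. The exponent $p(\cdot)$ is bounded, with $\frac92\le p\le 6$. The variable Lebesgue space $L^{p(\cdot)}$ is therefore a vector space, and membership is equivalent to finiteness of the modular
\[
\rho_{p(\cdot)}(f)=\int_{\mathbb{R}^3}|f(x)|^{p(x)}\,dx .
\]
Hence it is enough to check that each piece has finite modular.

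\textbf{Exterior piece.} On $\{|x|\geq R_0\}$ we have $p=\frac92+\overline\varepsilon$. The modular of $\mathbf{1}_{\{|x|\geq R_0\}}u$ is then exactly the modular appearing in the hypothesis of the corollary, so it is finite. If the hypothesis is instead phrased through the Luxemburg norm, we first rescale by a constant; finiteness of the modular for some multiple is equivalent since $p^+<\infty$.

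\textbf{Interior piece.} On $B(0,R_0)$ we have $p=6$. Since $u\in\dot H^1(\mathbb{R}^3)$, the Sobolev embedding $\dot H^1(\mathbb{R}^3)\hookrightarrow L^6(\mathbb{R}^3)$ gives
\[
\int_{B(0,R_0)}|u|^6\,dx\le \|u\|_{L^6}^6\lesssim\|\nabla u\|_{L^2}^6<\infty .
\]
This is the point where the choice $\varepsilon=\frac32$ inside the ball pays off: no information beyond the energy class is required there. Summing the two finite modulars gives $u\in L^{p(\cdot)}(\mathbb{R}^3)$, and Theorem \ref{thmpexplicit} concludes.

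There is no real obstacle in this reduction; all the substance lies in Theorem \ref{thmpexplicit}. The only point needing care is the equivalence between modular finiteness and norm finiteness in $L^{p(\cdot)}$, which holds because $p^+=6<\infty$. If one preferred a different interior exponent, any $p\le 6$ on the bounded ball would also work, using Hölder's inequality on a set of finite measure.
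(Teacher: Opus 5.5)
Your proposal is correct and takes essentially the same route as the paper. Both split $\mathbb{R}^3$ into $B(0,R_0)$, where $\frac92+\varepsilon=6$ and the Sobolev embedding $\dot H^1\hookrightarrow L^6$ suffices, and the exterior, where $\varepsilon=\overline\varepsilon$ and the hypothesis applies, and then invoke Theorem~\ref{thmpexplicit}; your use of the modular, finite because $p^+=6<\infty$, to glue the two pieces makes explicit a step the paper's proof only asserts.
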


\noindent We now comment on this result and its relation to the existing literature.

\begin{remark}
Corollary \ref{corollary-localized-assumption} shows that the integrability condition can be completely localized at infinity. More precisely, no assumption is required on the behavior of $u$ inside the ball $B(0,R_0)$, and it is enough to impose a variable exponent condition only in the exterior region. 
Compared to the classical assumption $u \in L^{9/2}(\mathbb{R}^3)$, this result allows for a spatially varying exponent of the form $9/2 + \overline{\varepsilon}(x)$, where $\overline{\varepsilon}(x) = \frac{3}{2}\frac{R_0}{|x|}$ decays to zero as $|x| \to \infty$. In particular, the critical exponent $9/2$ is only required asymptotically, and can be approached from above at a quantified rate.
 We also emphasize that $R_0$ is arbitrary, so the condition may be imposed arbitrarily far from the origin.
 %This highlights that the mechanism behind the result is purely driven by the asymptotic behavior of the solution.
\end{remark} 

 \begin{remark}
This result can be compared with the recent work of N.~Lerner \cite{Lerner26}, where the $L^{9/2}$ condition is relaxed by separating low and high frequency contributions. In contrast, our approach is purely formulated in physical space and shows that it is sufficient to control the behavior of $u$ at infinity, without any explicit frequency decomposition.
\end{remark}

Theorem \ref{thmpexplicit} follows from a more flexible uniqueness result formulated in the setting of Lebesgue spaces with variable exponents. This framework naturally captures the coexistence of different integrability regimes, $L^6$ near the origin and $L^{9/2}$ at infinity, within a single functional setting. 
This result is the following.
\begin{theorem}\label{thm:over92}
Let $R_0 > 1$ be fixed, and let $u \in \dot{H}^1(\mathbb{R}^3)$ be a solution of \eqref{SNS}. Let $p:\mathbb{R}^3 \to \mathbb{R}$ be a variable exponent such that:
\begin{enumerate}
\item $p(x) = 6$ for all $x \in B(0,R_0)$,
\item $p$ is 
Lipschitz,  
 %continuous,
 radially decreasing, and satisfies $p(x) \ge \frac{9}{2}$ for all $x \in \mathbb{R}^3$,
\item there exists a constant $C \in [0, R_0^2)$ such that
$
|p(x) - \tfrac{9}{2}| \le \frac{C}{|x|}, \  \text{for all } |x| \ge R_0^2.
$
\end{enumerate}
If, in addition, $u \in L^{p(\cdot)}(\mathbb{R}^3)$, then $u \equiv 0$.
\end{theorem}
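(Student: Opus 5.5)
We follow Galdi's energy argument: test the equation against a cut-off of $u$ and show that every term except the dissipation vanishes in the limit. Fix $\phi\in C_c^\infty(B(0,2))$ with $\phi\equiv 1$ on $B(0,1)$, and set $\phi_R(x)=\phi(x/R)$. Since $u\in\dot H^1\subset L^6$, standard regularity makes $(u,P)$ smooth, and $P$ can be normalized as $P=\sum_{i,j}\mathcal R_i\mathcal R_j(u_iu_j)$, with $\|P\|_{L^{q/2}(\Omega)}$ controlled by $\|u\|^2_{L^{q}}$ in the appropriate sense. Testing \eqref{SNS} with $\phi_R u$ and integrating by parts, using $\nabla\cdot u=0$, gives
\begin{equation}
\int \phi_R|\nabla u|^2\,dx=\frac12\int|u|^2\Delta\phi_R\,dx+\int\Big(\frac{|u|^2}{2}+P\Big)u\cdot\nabla\phi_R\,dx .
\end{equation}
The left side tends to $\|\nabla u\|_{L^2}^2$. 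The Laplacian term is bounded by $R^{-2}\|u\|^2_{L^6(A_R)}|A_R|^{2/3}\lesssim \|u\|_{L^6(A_R)}^2\to0$, where $A_R=\{R<|x|<2R\}$. So the whole task is to show that the cubic term
\begin{equation}
I_R=\int_{A_R}\big(|u|^3+|P||u|\big)\,|\nabla\phi_R|\,dx
\end{equation}
tends to $0$ along a sequence $R\to\infty$.

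On $A_R$ with $R\ge R_0^2$, hypothesis (3) and monotonicity give $\frac92\le p(x)\le \frac92+\frac{C}{R}$. The aim is a local Hölder inequality with the \emph{constant} exponent $p_R:=\min_{A_R}p=p(2R)$. Set $f=|u|\mathbf 1_{A_R}$. If $\int_{A_R}|u|^{p(x)}\,dx\le 1$ is small (it tends to $0$ because $u\in L^{p(\cdot)}$), split $A_R$ into the region where $|u|\le 1$ and the region where $|u|>1$. Where $|u|>1$ we have $|u|^{p_R}\le |u|^{p(x)}$. Where $|u|\le 1$ we have $|u|^{p_R}\le |u|^{p(x)}\,|u|^{-(p(x)-p_R)}$, and $p(x)-p_R\le C/R$ only costs a factor when $|u|$ is tiny. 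To handle this, also split by the level $|u|\le R^{-N}$: that part contributes at most $R^{-Np_R}|A_R|\lesssim R^{3-9N/2}$, which is negligible. On the remaining region the factor is at most $R^{NC/R}\to1$. Hence
\begin{equation}
\int_{A_R}|u|^{p_R}\,dx\le 2\int_{A_R}|u|^{p(x)}\,dx+o(1)\to0 .
\end{equation}
This is exactly where the $C/|x|$ rate is needed: it makes $R^{C/R}$ bounded. It is the step I expect to be the crux. The constraint $C<R_0^2$ presumably keeps the corresponding explicit constants below $1$, although a rough version of the argument suggests any finite $C$ should work.

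Next, Hölder on $A_R$ gives
\begin{equation}
R^{-1}\int_{A_R}|u|^3\le R^{-1}\|u\|^3_{L^{p_R}(A_R)}|A_R|^{1-3/p_R}\lesssim R^{-1+3(1-3/p_R)}\,\|u\|^3_{L^{p_R}(A_R)} .
\end{equation}
Since $3(1-3/p_R)\le 1+O(1/R)$, the exponent of $R$ is $O(1/R)$, so $R^{O(1/R)}$ is bounded and this term tends to $0$. The case $p_R=9/2$ is the critical balance in Galdi's proof; here it survives only up to the $R^{O(1/R)}$ correction, which is harmless.

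For the pressure, I would avoid a nonlocal estimate at varying exponent. Decompose $u=u\mathbf 1_{|x|<R_0}+u\mathbf 1_{|x|\ge R_0}$, so that $|u|^2$ splits into an $L^3$ part (inner region, where $p=6$) and an $L^{p(\cdot)/2}$ part. Then use boundedness of $\mathcal R_i\mathcal R_j$ on variable-exponent Lebesgue spaces $L^{p(\cdot)/2}$. This holds because $p$ is Lipschitz, hence log-Hölder continuous locally, and because of the decay condition $|p-\frac92|\le C/|x|$, which is the standard log-Hölder condition at infinity. This yields $P\in L^{p(\cdot)/2}+L^3$ with modular tails tending to $0$. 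Applying the same constant-exponent reduction on $A_R$ to $P$, with exponent $p_R/2$, Hölder gives $R^{-1}\int_{A_R}|P||u|\to0$ by the same exponent count. Letting $R\to\infty$ yields $\nabla u=0$, so $u$ is constant, and $u\in L^6$ forces $u\equiv0$.
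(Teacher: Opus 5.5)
Your argument works, modulo one repairable slip in the pressure step, and on the annuli it takes a different route from the paper.

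\textbf{Comparison of routes.} The paper stays inside the variable-exponent framework. It applies the $L^{p(\cdot)}$ H\"older inequality on $\mathcal{C}_R$ and bounds the cut-off factor through $\|1\|_{L^{r(\cdot)}(\mathcal{C}_R)}\le 2\max\{|\mathcal{C}_R|^{1/r^-},|\mathcal{C}_R|^{1/r^+}\}$, which produces $R^{2-9/p^+_{\mathcal{C}_R}}$. It then identifies $p^+_{\mathcal{C}_R}=\tilde p(R/2)=\frac92+O(1/R)$, so that $R^{2-9/p^+_{\mathcal{C}_R}}=1+O(\ln R/R)$. You instead compare modulars with the constant exponent $p_R=\inf_{A_R}p$ on the level sets $\{|u|>1\}$, $\{R^{-N}<|u|\le 1\}$, $\{|u|\le R^{-N}\}$. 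This loses only a factor $R^{NC/R}\to 1$ and an additive $O(R^{3-9N/2})$, after which classical H\"older applies.

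Both proofs rest on the same fact: $p$ oscillates by $O(1/R)$ on the annulus, so Galdi's critical exponent count is spoiled only by $R^{O(1/R)}$. Your route is more elementary:
\begin{itemize}
\item the only genuinely variable-exponent input left is the boundedness of $\mathcal R_i\mathcal R_j$ on $L^{p(\cdot)/2}$, which you and the paper both justify by $p\in\mathcal P^{\log}(\mathbb{R}^3)$;
\item the Laplacian term is handled directly by $u\in L^6$;
\item it confirms that the restriction $C<R_0^2$ is irrelevant; in the paper it only ensures $\varepsilon_R<1$ in a Taylor expansion.
\end{itemize}

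\textbf{The step that needs repair.} Splitting $u\otimes u$ into an inner and an outer piece leaves $P=P_1+P_2$, where $P_1=\sum_{i,j}\mathcal R_i\mathcal R_j(u_iu_j\mathbf 1_{B(0,R_0)})$ is only recorded as lying in $L^3$. Your level-set reduction does not apply to $P_1$ with exponent $p_R/2\approx 9/4$, because the gap $3-p_R/2\approx 3/4$ is not $O(1/R)$. Plain H\"older gives $R^{-1}\int_{A_R}|P_1||u|\lesssim R^{1-3/p_R}\|P_1\|_{L^3(A_R)}\|u\|_{L^{p_R}(A_R)}$ with $R^{1-3/p_R}\approx R^{1/3}$, and this does not close without a decay rate.

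The decomposition is unnecessary. The hypothesis $u\in L^{p(\cdot)}(\mathbb{R}^3)$ is global, so $u_iu_j\in L^{p(\cdot)/2}(\mathbb{R}^3)$ and hence $P\in L^{p(\cdot)/2}(\mathbb{R}^3)$ directly, as in the paper. Your reduction with exponent $p_R/2$ then goes through verbatim, taking $N>4/3$ in the small-level cutoff.

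Alternatively, $P_1$ is a Calder\'on--Zygmund operator applied to a compactly supported $L^1$ function. Hence $|P_1(x)|\lesssim |x|^{-3}$ for $|x|\ge 2R_0$, which gives $\|P_1\|_{L^3(A_R)}\lesssim R^{-2}$ and also suffices.

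Note also that your stated intention to ``avoid a nonlocal estimate at varying exponent'' is not realized: you still use Riesz boundedness on $L^{p(\cdot)/2}$, and that is fine.
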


\noindent We now place this result in the context of related literature.
\begin{remark}
Theorem \ref{thm:over92} shows that the rigidity of the problem is not determined by a uniform integrability condition, but rather by how integrability is distributed across the domain. In particular, having $L^6$ control on arbitrarily large bounded regions, combined with a mild and continuous relaxation toward the critical exponent $9/2$ at infinity, is sufficient to ensure triviality. 
The framework of variable exponent spaces $L^{p(\cdot)}(\mathbb{R}^3)$ naturally captures this phenomenon, allowing for a smooth transition between different integrability regimes within a single functional setting.
\end{remark}

 \begin{remark}
The use of Lebesgue spaces with variable exponents has previously been
considered in the context of Liouville theorems for stationary
Navier--Stokes equations. In \cite{CV2023}, discontinuous exponents are
introduced on regions of infinite measure. However, such spaces do not
generally ensure the boundedness of the Riesz transforms, which are
required to recover the pressure from the velocity field. As a result,
conditions must be imposed simultaneously on both $u$ and $P$.
This difficulty is avoided in our setting. The continuous exponent
$p(\cdot)$ in Theorem \ref{thm:over92} belongs to a class for which the
Riesz transforms are bounded (see Definition \ref{globallogholder} and
Proposition \ref{prop:pinplog} below). This allows us to formulate the
condition solely in terms of the velocity field.
\end{remark} 

The rest of this paper is organized as follows. In Section \ref{sect:2}, we provide a brief review of variable exponent Lebesgue spaces, covering their definitions and key properties, along with the proof of several useful lemmas. Section \ref{sec:3} is devoted to the proof of our main results. \\

Given $R>1$, in what follows, and throughout this paper, we consider the following notation
\[
\mathcal{C}_R := \{ x \in \mathbb{R}^3 \ : \ R/2 < |x| < R \}.
\]

\section{Preliminaries}\label{sect:2}
To keep this paper reasonably self-contained, several results and definitions of variable Lebesgue spaces are recalled. To begin,  given a set $\Omega\subset\mathbb{R}^{n}$, let $\mathcal{P}(\Omega)$ be the set of all Lebesgue measurable functions $p(\cdot):\Omega\rightarrow[1,+\infty]$.  The elements of $\mathcal{P}(\Omega)$ are called variable exponent functions or simply variable exponents. For any $p(\cdot)\in \mathcal{P}(\Omega)$, we denote
$$
 p^{-}:=\operatorname{essinf}_{x\in\Omega}p(x), \ \  p^{+}:=\operatorname{esssup}_{x\in\Omega}p(x).
$$
Throughout this paper, we will assume  $1<p^-\leq p^+<+\infty$.\\

Given a domain $\Omega\subseteq\mathbb{R}^{n}$ and $p(\cdot)\in \mathcal{P}(\Omega)$, for a measurable function $u$, we consider 
\begin{equation}\label{equation2.2}
   \|u\|_{L^{p(\cdot)}}:=\inf\left\{\lambda>0:  \rho_{p(\cdot)}\left(\frac{u}{\lambda}\right)\leq 1\right\},
\end{equation}
where the modular function $\rho_{p(\cdot)}$ associated with $p(\cdot)$ is given by 
\begin{equation*}
   \rho_{p(\cdot)}(f):=
   \int_\Omega|u(x)|^{p(x)}dx.
\end{equation*}
If the set on the right-hand side of  \eqref{equation2.2} is empty then $\|u\|_{L^{p(\cdot)}} = +\infty$ by definition.
At this point, is interesting to note that,  if the exponent function $p(\cdot)$ is a constant, i.e. if $p(\cdot)=p\in [1,\infty)$, then we can obtain the usual norm  via the modular function $ \rho_{p}$.
\smallbreak

\begin{definition}\label{de2.2}
Given a domain $\Omega\subseteq\mathbb{R}^{n}$ and $p(\cdot)\in \mathcal{P}(\Omega)$, we define the variable exponent Lebesgue space $L^{p(\cdot)}(\Omega)$ as the set of   measurable functions $u$ such that   $\|u\|_{L^{p(\cdot)}}<+\infty$.
\end{definition}

\begin{remark}
Note that, $L^{p(\cdot)}(\Omega)$ is a Banach space associated with the norm $\|\cdot\|_{L^{p(\cdot)}}$.
\end{remark}

Next, we collect some properties of the variable exponent Lebesgue spaces. 
%--------------------------------
\begin{lemma}[H\"{o}lder inequality]
Consider $\Omega\subseteq \mathbb{R}^n$ and $p_1(\cdot),\,p_2(\cdot),\,p(\cdot)\in \mathcal{P}(\mathbb{R}^n)$ such that 
$\frac{1}{p(x)}=\frac{1}{p_1(x)}+\frac{1}{p_2(x)}$, for $x\in \Omega$. 
Then, given $u\in L^{p_1(\cdot)}(\Omega) $ and $v \in L^{p_2(\cdot)}(\Omega)$, the pointwise product $uv$ belongs to  $L^{p(\cdot)}(\Omega)$, and there exists  $C>0$ such that 
\begin{equation}\label{eq2.3}
\|uv\|_{L^{p(\cdot)}(\Omega)} \leq C\|u\|_{L^{p_1(\cdot)}(\Omega)}\|v\|_{L^{p_2(\cdot)}(\Omega)}.
\end{equation}
\end{lemma}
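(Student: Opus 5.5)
The plan is to reduce the variable exponent statement to pointwise Young and scalar Hölder inequalities, working first at the level of modulars and only afterwards passing to norms. I will use the standard fact that for $p^+<\infty$ one has $\|f\|_{L^{p(\cdot)}}\le 1$ if and only if $\rho_{p(\cdot)}(f)\le 1$. This fact follows from the definition \eqref{equation2.2} together with continuity and monotonicity of $\lambda\mapsto\rho_{p(\cdot)}(f/\lambda)$, which in turn follows from dominated convergence since $p^+<\infty$. By homogeneity of the norm we may then assume $\|u\|_{L^{p_1(\cdot)}(\Omega)}=\|v\|_{L^{p_2(\cdot)}(\Omega)}=1$, the cases where either norm vanishes or is infinite being trivial. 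With this normalization the goal becomes $\|uv\|_{L^{p(\cdot)}}\le C$, and it suffices to show $\rho_{p(\cdot)}(uv/C)\le 1$ for a suitable constant $C$.

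The key pointwise step is Young's inequality with the exponents $p_1(x)/p(x)$ and $p_2(x)/p(x)$, which are conjugate since $p(x)/p_1(x)+p(x)/p_2(x)=1$. At points where both $p_i(x)<\infty$ this gives
\begin{equation*}
|u(x)v(x)|^{p(x)} \le \frac{p(x)}{p_1(x)}|u(x)|^{p_1(x)}+\frac{p(x)}{p_2(x)}|v(x)|^{p_2(x)} \le |u(x)|^{p_1(x)}+|v(x)|^{p_2(x)}.
\end{equation*}
Integrating over $\Omega$ and using $\rho_{p_1(\cdot)}(u)\le1$ and $\rho_{p_2(\cdot)}(v)\le1$, which follow from the normalization, yields $\rho_{p(\cdot)}(uv)\le 2$. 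Since $p(x)\ge p^-\ge1$, we have $\rho_{p(\cdot)}(uv/2)\le 2^{-p^-}\rho_{p(\cdot)}(uv)\le 1$. Hence $\|uv\|_{L^{p(\cdot)}}\le 2$, and by homogeneity the inequality \eqref{eq2.3} holds with $C=2$. Measurability of $uv$ is immediate.

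The only genuine obstacle is bookkeeping at points where some $p_i(x)=\infty$, which is allowed a priori in $\mathcal{P}$, although the standing assumption $p^+<\infty$ excludes it if imposed on $p_1$ and $p_2$. If one wants to cover that case, I would split $\Omega$ into the set where $p_1=\infty$ (so $p=p_2$), the set where $p_2=\infty$, and the remaining set. On the first two sets I would use $|u|\le\|u\|_{L^\infty}\le1$ on the relevant set, and then add the three modular contributions, which changes only the constant (e.g.\ to $C=4$). In the main case covered by the paper's standing hypothesis, the two-paragraph Young argument above is the whole proof.
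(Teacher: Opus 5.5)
Your argument is correct and is essentially the standard proof the paper relies on: the paper gives no proof of its own but cites \cite[Corollary 2.28]{CRUZ} and \cite[Lemma 3.2.20]{Diening_Libro}, and the latter proceeds as you do (unit-ball property, pointwise Young inequality with exponents $p_1(x)/p(x)$ and $p_2(x)/p(x)$ giving $\rho_{p(\cdot)}(uv)\le 2$, then convexity), with constant $C=2$. Your treatment of points where some $p_i(x)=\infty$ is only sketched, but it is not needed under the paper's standing assumption $p^+<\infty$.
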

%--------------------------------
A proof of this result can be consulted in \cite[Corollary 2.28]{CRUZ} or  \cite[Lemma 3.2.20]{Diening_Libro}.
%%%%%%%%%%%%%%%%%%%%%%%%%%%%%%%%%%%%%%%%%%%%%%%%%%%
\begin{definition}\label{globallogholder}
Consider $\Omega\subseteq \mathbb{R}^d$ 
and a variable exponent $p(\cdot)\in \mathcal{P}(\Omega)$.
We say that  $ p(\cdot): \Omega \to \mathbb{R} $ is  locally log-Hölder continuous on $ \Omega $ if there exists $ C_1 > 0 $ such that
\begin{equation}\label{relation_loc_logholder}
|p(x) - p(y)| \leq \frac{C_1}{\log(e + 1/|x - y|)},
\end{equation}
for all $ x, y \in \Omega $. We say that $p(\cdot) $ satisfies the  log-Hölder decay condition if there exist $ p_\infty \in \mathbb{R} $ and a constant $ C_2 > 0 $ such that, for all $ x \in \Omega $ we have
\begin{equation}\label{relation_decay_logholder}
|p(x) -  p_\infty| \leq \frac{C_2}{\log(e + |x|)}
.
\end{equation}
 We say that $ p(\cdot) $ is  globally log-Hölder continuous in $ \Omega $ if it is locally log-Hölder continuous and satisfies the log-Hölder decay condition. 
Such class of exponents is denoted by 
$\mathcal{P}^{log}(\Omega)$. 
\end{definition}
%%%%%%%%%%%%%%%%%%%%%%%%%%%%%%%%%%%%%%%%%%%%%%%%%%%
At this point,  it is convenient to recall the following: 
given a measurable domain  $ \Omega \subset \Rt$ and a variable exponent $p(\cdot) \in \mathcal{P}(\Rt)$,  the notation 
$p_\Omega(\cdot)$ stands for the variable exponent restricted to the set $\Omega$, \emph{i.e.} ${p}_\Omega(\cdot)=  p  (\cdot)_{|_{\Omega}}$. 
%%%%%%%%%%%%%%%%%%%%%%%%%%%%%%%%%%%%%%%%%%%%%%%%%%%
\begin{lemma}\label{PropositionLpplusminus}
Consider a measurable set $\Omega\subset \R^3$ and $p(\cdot)\in \mathcal{P}(\Rt)$ a variable exponent, assume that we have $|\Omega|<+\infty$.  Then 
$$\|1\|_{L^{p_{\Omega}(\cdot) }(\Omega)}\leq 2\max\{|\Omega|^{\frac 1{ p^-}},|\Omega|^{\frac 1{p^+}}\}.$$
\end{lemma}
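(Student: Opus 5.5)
We argue directly from the definition of the Luxemburg norm \eqref{equation2.2}. It is enough to find an explicit $\lambda>0$ with $\rho_{p_\Omega(\cdot)}(1/\lambda)\le 1$, because then $\|1\|_{L^{p_\Omega(\cdot)}(\Omega)}\le\lambda$. Here $p^-$ and $p^+$ are the essential infimum and supremum of $p$; restricting to $\Omega$ can only increase $p^-$ and decrease $p^+$, and the argument below works with either choice. If $|\Omega|=0$ the modular vanishes for every $\lambda>0$, so the norm is $0$ and there is nothing to prove. We therefore assume $0<|\Omega|<\infty$, and recall that $1<p^-\le p^+<\infty$ throughout.

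\emph{Case $|\Omega|\ge 1$.} Then $\max\{|\Omega|^{1/p^-},|\Omega|^{1/p^+}\}=|\Omega|^{1/p^-}$. Take $\lambda=|\Omega|^{1/p^-}\ge 1$. For a.e.\ $x\in\Omega$ we have $p(x)\ge p^-$ and $\lambda\ge1$, so $\lambda^{-p(x)}\le\lambda^{-p^-}=|\Omega|^{-1}$. Integrating over $\Omega$ gives $\rho_{p_\Omega(\cdot)}(1/\lambda)\le 1$, and hence $\|1\|\le|\Omega|^{1/p^-}$.

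\emph{Case $|\Omega|<1$.} Now the maximum equals $|\Omega|^{1/p^+}$. Take $\lambda=|\Omega|^{1/p^+}<1$. Since $\lambda<1$ and $p(x)\le p^+$, we get $\lambda^{-p(x)}\le\lambda^{-p^+}=|\Omega|^{-1}$, so again the modular is at most $1$ and $\|1\|\le|\Omega|^{1/p^+}$.

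In both cases the bound holds without the factor $2$, so the stated inequality follows a fortiori. The only point that needs care is the direction of the monotonicity of $t\mapsto\lambda^{-t}$, which depends on whether $\lambda$ is above or below $1$. This is exactly what decides whether $p^-$ or $p^+$ appears, and hence why the bound is written with a maximum.
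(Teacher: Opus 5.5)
Your argument is correct. The paper does not actually prove this lemma; it only cites \cite[Lemma 3.2.12]{Diening_Libro}, so your proposal supplies a self-contained argument where the paper has none.

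Your route is about as elementary as possible. You test the Luxemburg infimum with the explicit candidate $\lambda=\max\{|\Omega|^{1/p^-},|\Omega|^{1/p^+}\}$. You then use that $t\mapsto\lambda^{-t}$ is nonincreasing for $\lambda\ge 1$ and increasing for $\lambda<1$. This is exactly what selects $p^-$ when $|\Omega|\ge 1$ and $p^+$ when $|\Omega|<1$, and it gives the bound with constant $1$ instead of $2$.

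Your other side remarks are also right. The degenerate case $|\Omega|=0$ is handled correctly. The argument works whether $p^{\pm}$ are taken over $\mathbb{R}^3$ or over $\Omega$, and the restricted version is the sharper one.

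What the citation buys is generality. The reference treats arbitrary exponents in $\mathcal{P}$, including ones that are unbounded or infinite on part of the domain. Your proof is written under the paper's standing assumption $1<p^-\le p^+<\infty$. That is all the paper needs, since it later applies the lemma on the annuli $\mathcal{C}_R$, where $9/2\le p\le 6$. There your version suffices and shows that the factor $2$ in the statement is superfluous.
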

%%%%%%%%%%%%%%%%%%%%%%%%%%%%%%%%%%%%%%%%%%%%%%%%%%%
The proof of this result can be consulted in \cite[Lemma 3.2.12]{Diening_Libro}.  
%%%%%%%%%%%%%%%%%%%%%%%%%%%%%%%%%%%%%%%%%%%%%%%%%%%
\begin{lemma}\label{lemmaLinftyLpvariable}
Let $\Omega\subseteq \R^3$ and $p(\cdot)\in \mathcal{P}(\mathbb{R}^3)$ a variable exponent. Then, we have the space inclusion $L^{\infty} (\Omega) \subset L^{p_{\Omega}(\cdot)} (\Omega)$, if and only if $1\in  L^{p_{\Omega}(\cdot)} (\Omega) $ and  the following estimate follows
$$\|f\|_{L^{p_{\Omega}(\cdot)} (\Omega)}\leq \|f\|_{L^\infty(\Omega)}\|1\|_{L^{p_{\Omega}(\cdot)} (\Omega)}.$$
In particular, the embedding holds if $|\Omega|<+\infty$.
\end{lemma}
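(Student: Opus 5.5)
The inclusion follows from the homogeneity of the Luxemburg norm together with the monotonicity of the modular, and I would prove the two directions separately.

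\emph{Necessity.} If $L^\infty(\Omega)\subset L^{p_\Omega(\cdot)}(\Omega)$, apply the inclusion to $f\equiv 1$, which lies in $L^\infty(\Omega)$ with norm $1$. This gives $1\in L^{p_\Omega(\cdot)}(\Omega)$ at once.

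\emph{Sufficiency and the estimate.} Assume $1\in L^{p_\Omega(\cdot)}(\Omega)$ and let $f\in L^\infty(\Omega)$. If $\|f\|_{L^\infty}=0$ there is nothing to prove, so set $M=\|f\|_{L^\infty(\Omega)}>0$. Then $|f(x)|\le M\cdot 1$ for a.e.\ $x\in\Omega$. Take any $\lambda>0$ admissible for the constant $1$, i.e.\ $\rho_{p(\cdot)}(1/\lambda)\le 1$. Since $t\mapsto t^{p(x)}$ is increasing on $[0,\infty)$, we have pointwise a.e.
\[
\Bigl|\frac{f(x)}{M\lambda}\Bigr|^{p(x)}\le \Bigl(\frac{1}{\lambda}\Bigr)^{p(x)}.
\]
Integrating over $\Omega$ gives $\rho_{p(\cdot)}\bigl(f/(M\lambda)\bigr)\le\rho_{p(\cdot)}(1/\lambda)\le1$. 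Hence $M\lambda$ is admissible in \eqref{equation2.2} for $f$, and so $\|f\|_{L^{p_\Omega(\cdot)}}\le M\lambda$. Taking the infimum over all admissible $\lambda$ yields
\[
\|f\|_{L^{p_\Omega(\cdot)}(\Omega)}\le\|f\|_{L^\infty(\Omega)}\,\|1\|_{L^{p_\Omega(\cdot)}(\Omega)}.
\]
In particular $f\in L^{p_\Omega(\cdot)}(\Omega)$, which proves the inclusion.

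\emph{Finite measure case.} If $|\Omega|<+\infty$, Lemma \ref{PropositionLpplusminus} gives $\|1\|_{L^{p_\Omega(\cdot)}(\Omega)}\le 2\max\{|\Omega|^{1/p^-},|\Omega|^{1/p^+}\}<\infty$, using the standing assumption $1<p^-\le p^+<\infty$. So $1\in L^{p_\Omega(\cdot)}(\Omega)$, and the previous step applies.

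The argument has no real obstacle. The only point needing care is that admissibility transfers from $1$ to $f$ under scaling by $M$; this uses only monotonicity of the modular and does not require the unit-ball property $\rho(f/\|f\|)\le 1$.
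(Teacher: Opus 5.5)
Your proof is correct: testing the inclusion on $f\equiv 1$ gives necessity, and the pointwise bound $|f|\le \|f\|_{L^\infty(\Omega)}$, combined with monotonicity of the modular and homogeneity of the Luxemburg norm, gives the estimate; Lemma \ref{PropositionLpplusminus} then handles the finite-measure case. The paper gives no argument of its own here and only cites \cite[Proposition 2.43]{CRUZ}, and your argument is the standard proof of that result, namely the lattice property of the norm applied to $|f|\le \|f\|_{L^\infty(\Omega)}\cdot 1$.
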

%%%%%%%%%%%%%%%%%%%%%%%%%%%%%%%%%%%%%%%%%%%%%%%%%%%
The proof of this result can be found in \cite[Proposition 2.43]{CRUZ}. 
%%%%
\begin{lemma}\label{lemma2.3}
For a bounded domain $\Omega\subset\mathbb{R}^{n}$ and two exponent functions $p(\cdot), q(\cdot)\in \mathcal{P}(\Omega)$ such that $1<p^{+}, q^{+}<+\infty$. Then  $L^{q(\cdot)}(\Omega)\hookrightarrow L^{p (\cdot)}(\Omega)\iff p (x)\leq q(x)$ almost everywhere. 
%Furthermore,  in this case we have
%\begin{equation}\label{eq2.4}
%   \|f\|_{L^{p_{1}(\cdot)}}\leq \left(1+|\Omega|\right)\|f\|_{L^{p_{2}(\cdot)}}.
%\end{equation}
\end{lemma}
%%%%
For a proof of this result, please see \cite[Corollary 2.48]{CRUZ}. 
In the next proposition we present nice relations between the norm of a function in $L^{p(\cdot)}$ and its modular function.
\begin{proposition} \label{proposition.utendzero}
 Given $\Omega \subseteq \R^n$ and $p(\cdot) \in \mathcal{P}(\Omega)$. If $\|u\|_{L^{p(\cdot)}(\Omega)} > 1$, then
\[
 \|u\|_{L^{p(\cdot)}(\Omega)} \leq \left(\int_{\Omega} |u(x)|^{p(x)}dx \right)^{1/p_-}.
\]
If $\|u\|_{L^{p(\cdot)}(\Omega)} \leq 1$, then
\[
 \|u\|_{L^{p(\cdot)}(\Omega)} \leq \left(\int_{\Omega} |u(x)|^{p(x)}dx \right)^{1/p_+}.
\] 
\end{proposition}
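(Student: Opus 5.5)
We prove the two inequalities through the defining property of the Luxemburg norm, working with the normalized function $u/\lambda$ and comparing powers of $\lambda$ with the extremal exponents $p_-$ and $p_+$. Throughout we assume $p_+<+\infty$, as fixed at the beginning of this section, and $0<\rho_{p(\cdot)}(u)<+\infty$; the remaining cases are trivial or vacuous.

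\emph{First step: the modular is continuous and strictly monotone in $\lambda$.} Set
\[
\phi(\lambda):=\rho_{p(\cdot)}(u/\lambda)=\int_\Omega |u(x)|^{p(x)}\lambda^{-p(x)}\,dx .
\]
Because $p$ is bounded, $\phi$ is finite for every $\lambda>0$. Dominated convergence shows that $\phi$ is continuous. It is also strictly decreasing on $\{\phi<\infty\}$, since $u\neq 0$ on a set of positive measure. Consequently the infimum in \eqref{equation2.2} is attained: $\lambda_0:=\|u\|_{L^{p(\cdot)}(\Omega)}$ satisfies $\phi(\lambda_0)=1$.

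\emph{Case $\lambda_0>1$.} For every $x$ we have $\lambda_0^{-p(x)}\ge \lambda_0^{-p_-}$, since $\lambda_0>1$ and $p(x)\ge p_-$. Hence
\[
1=\phi(\lambda_0)\ge \lambda_0^{-p_-}\int_\Omega|u|^{p(x)}\,dx .
\]
This rearranges to $\lambda_0^{p_-}\le\rho_{p(\cdot)}(u)$, which is the first claimed bound.

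\emph{Case $\lambda_0\le 1$.} Now $\lambda_0^{-p(x)}\ge\lambda_0^{-p_+}$, because $\lambda_0\le1$ and $p(x)\le p_+$. Therefore $1\ge\lambda_0^{-p_+}\rho_{p(\cdot)}(u)$, which gives $\lambda_0^{p_+}\le \rho_{p(\cdot)}(u)$.

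The one delicate point is establishing $\phi(\lambda_0)=1$, that is, the continuity step. If $\phi$ were not continuous, one could only conclude $\phi(\lambda_0)\le 1$ from lower semicontinuity via Fatou's lemma, and that inequality points the wrong way. The continuity is exactly where $p_+<\infty$ is used. If one prefers to avoid this, an alternative is available: for any $\lambda<\lambda_0$ we have $\phi(\lambda)>1$, and applying the same pointwise comparison at such $\lambda$ (taken $>1$ in the first case, and arbitrary in the second) and then letting $\lambda\uparrow\lambda_0$ yields the same inequalities.
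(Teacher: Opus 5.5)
Your strategy is the standard one, and it is the argument behind the Cruz-Uribe--Fiorenza result the paper cites instead of giving a proof: use $\rho_{p(\cdot)}(u/\|u\|_{L^{p(\cdot)}})=1$ when $p_+<\infty$ and $u\neq0$, then compare $\lambda^{-p(x)}$ pointwise with $\lambda^{-p_\pm}$. Your first step is fine, once you note that $\phi(\lambda)\to\infty$ as $\lambda\to0^+$ and $\phi(\lambda)\to0$ as $\lambda\to\infty$, so the value $1$ is actually attained.

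However, both case computations are wrong as written. They reach the correct inequalities only because two errors cancel.

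\emph{The pointwise comparison is reversed.} If $\lambda_0>1$ and $p(x)\ge p_-$, then $\lambda_0^{p(x)}\ge\lambda_0^{p_-}$, hence $\lambda_0^{-p(x)}\le\lambda_0^{-p_-}$. For instance, with $\lambda_0=2$, $p(x)=3$, $p_-=2$, your claim reads $1/8\ge1/4$. Likewise, $\lambda_0\le1$ and $p(x)\le p_+$ give $\lambda_0^{-p(x)}\le\lambda_0^{-p_+}$.

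\emph{The rearrangement is also reversed.} The inequality $1\ge\lambda_0^{-p_-}\rho_{p(\cdot)}(u)$ rearranges to $\rho_{p(\cdot)}(u)\le\lambda_0^{p_-}$, not to $\lambda_0^{p_-}\le\rho_{p(\cdot)}(u)$. The bound your chain would honestly produce, $\rho_{p(\cdot)}(u)\le\lambda_0^{p_-}$, fails in general. For $\lambda_0>1$ one has $\lambda_0^{p_-}\le\rho_{p(\cdot)}(u)\le\lambda_0^{p_+}$, and the left inequality can be strict. The same two reversals occur in the case $\lambda_0\le1$, so the displayed chains do not prove the proposition.

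The fix is immediate. For $\lambda_0>1$,
\[
1=\phi(\lambda_0)=\int_\Omega |u(x)|^{p(x)}\lambda_0^{-p(x)}\,dx\le\lambda_0^{-p_-}\rho_{p(\cdot)}(u),
\]
so $\lambda_0^{p_-}\le\rho_{p(\cdot)}(u)$. For $0<\lambda_0\le1$, the same computation with $p_+$ gives $\lambda_0^{p_+}\le\rho_{p(\cdot)}(u)$.

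What is really used is $\phi(\lambda_0)\ge1$, i.e.\ the left-continuity half of $\phi(\lambda_0)=1$, which is where $p_+<\infty$ enters. This agrees with your closing remark that Fatou's $\phi(\lambda_0)\le1$ ``points the wrong way''. But that remark is inconsistent with the chain you displayed, for which $\phi(\lambda_0)\le1$ would have sufficed.

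Your alternative argument is also valid, and it avoids the continuity of $\phi$ altogether, but again only with the corrected comparison. For $1<\lambda<\lambda_0$ one has $1<\phi(\lambda)\le\lambda^{-p_-}\rho_{p(\cdot)}(u)$, and letting $\lambda\uparrow\lambda_0$ gives the first bound. The second bound follows likewise with $p_+$ for $0<\lambda<\lambda_0\le1$. With the inequality as you stated it, $\phi(\lambda)>1$ yields no information.
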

\noindent A proof of this result can be consulted in \cite[Chapter 2, page 25]{CRUZ}. We will use it for proving the following lemma.

\begin{lemma} \label{Lemma.utendzero}
Let $p(\cdot)\in \mathcal{P}(\R^3 )$  and $u\in L^{p(\cdot)}(\R^3 )$. 
Then, the following limit follows
\begin{equation}
\lim_{R\to +\infty} \| u\|_{L^{p(\cdot)}( \mathcal{C}_R  )} =0
.
\end{equation}
\end{lemma}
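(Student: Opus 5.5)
The plan is to reduce the statement to the convergence of the modular on the annuli and then transfer this convergence to the norm using Proposition \ref{proposition.utendzero}. Since $u\in L^{p(\cdot)}(\mathbb{R}^3)$ and $p^+<+\infty$ (the standing assumption), I will first show that the modular is finite: $\rho_{p(\cdot)}(u)=\int_{\mathbb{R}^3}|u(x)|^{p(x)}\,dx<+\infty$. Indeed, there is $\lambda\ge 1$ with $\rho_{p(\cdot)}(u/\lambda)\le 1$. Since $|u|^{p(x)}=\lambda^{p(x)}|u/\lambda|^{p(x)}\le \lambda^{p^+}|u/\lambda|^{p(x)}$, this gives $\rho_{p(\cdot)}(u)\le \lambda^{p^+}<+\infty$.

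Next, I will apply the dominated convergence theorem to the functions $\mathbf{1}_{\mathcal{C}_R}(x)|u(x)|^{p(x)}$. For each fixed $x$, these tend to $0$ as $R\to+\infty$, because $x\in\mathcal{C}_R$ forces $R>|x|$ and so fails for $R\le |x|$. They are also dominated by the integrable function $|u|^{p(\cdot)}$. Hence
\[
m(R):=\int_{\mathcal{C}_R}|u(x)|^{p(x)}\,dx\longrightarrow 0 \quad\text{as } R\to+\infty .
\]
Alternatively, one can avoid dominated convergence by noting that $\mathcal{C}_R\subset\{|x|>R/2\}$, so $m(R)$ is bounded by the tail of a convergent integral.

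The final step converts modular smallness into norm smallness via Proposition \ref{proposition.utendzero}, applied on $\Omega=\mathcal{C}_R$ with the restricted exponent. Its infimum and supremum satisfy $p^-\le p^-_{\mathcal{C}_R}\le p^+_{\mathcal{C}_R}\le p^+$, so both are finite and at least $1$. I would first rule out the case $\|u\|_{L^{p(\cdot)}(\mathcal{C}_R)}>1$ for large $R$. In that case the proposition gives $1<\|u\|\le m(R)^{1/p^-_{\mathcal{C}_R}}$, which forces $m(R)>1$ and contradicts $m(R)\to0$. So for $R$ large the norm is at most $1$, and the proposition yields
\[
\|u\|_{L^{p(\cdot)}(\mathcal{C}_R)}\le m(R)^{1/p^+_{\mathcal{C}_R}}\le m(R)^{1/p^+}\quad (\text{once } m(R)\le 1),
\]
which tends to $0$.

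The only mildly delicate point is this last one: the exponent in Proposition \ref{proposition.utendzero} depends on $R$ through $p^+_{\mathcal{C}_R}$. It is handled by the uniform bound $p^+<\infty$ together with the monotonicity $t^{1/a}\le t^{1/b}$ for $t\le1$ and $a\le b$. Without $p^+<\infty$ the first step, finiteness of the modular, could fail, so that standing assumption is essential.
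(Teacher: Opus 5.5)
Your proof is correct and follows the paper's route. The modular on $\mathcal{C}_R$ is bounded by the tail $\int_{\{|x|>R/2\}}|u|^{p(x)}\,dx\to 0$ (dominated convergence), and Proposition \ref{proposition.utendzero} converts this into norm convergence. Your extra checks fill in details the paper leaves implicit: finiteness of the modular via $\lambda^{p^+}$, ruling out $\|u\|_{L^{p(\cdot)}(\mathcal{C}_R)}>1$, and the uniform bound $m(R)^{1/p^+_{\mathcal{C}_R}}\le m(R)^{1/p^+}$. One slip: $\mathbf{1}_{\mathcal{C}_R}(x)$ vanishes for large $R$ because $x\in\mathcal{C}_R$ forces $R<2|x|$, so membership fails for $R\ge 2|x|$. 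The condition $R>|x|$ you cite only excludes small $R$, but your alternative tail argument is the correct justification.
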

\begin{proof}
To begin note that, by considering that $ \mathcal{C}_R  \subset \{ \frac R 2 < |x| \}=:E_R$, we have 
\begin{equation}
\int_{ \mathcal{C}_R } |u(x)|^{p(x)}dx
\leq 
\int_{E_R} |u(x)|^{p(x)}dx<\infty.
\end{equation}
Since $|u(x)|^{p(x)} \in L^1(\mathbb{R}^3)$ and $E_R \downarrow \emptyset$, we have
$
\lim_{R \to \infty} \int_{E_R} |u(x)|^{p(x)}dx = 0,
$
by the dominated convergence theorem.
Hence, we conclude 
$$\int_{ \mathcal{C}_R } |u(x)|^{p(x)}dx \to  0\quad   
 \text{as } \ \  R \to \infty .$$
 
%Let $f(x):=|u(x)|^{p(x)} \in L^1(\mathbb{R}^3)$ and define 
%$f_R(x):=f(x)\chi_{E_R}(x)$. Then $f_R(x)\to 0$ pointwise and 
%$|f_R(x)|\le f(x)\in L^1(\mathbb{R}^3)$. Hence, by the dominated 
%convergence theorem,
%\[
%\lim_{R\to\infty}\int_{E_R}|u(x)|^{p(x)}dx=\lim_{R\to\infty}\int_{\mathbb{R}^3} f_R(x)\,dx=0.\]

%Thus, considering this fact and Proposition \ref{proposition.utendzero}, we conclude the desired limit.  
 % Thus, since
%\[\int_{ \mathcal{C}_R } |u(x)|^{p(x)}dx \to 0,\]
\noindent 
%Thus, there exists $R_0>0$ such that for all $R\ge R_0$,
In particular, there exists $R_0>0$ such that for all $R \ge R_0$,
$
\int_{ \mathcal{C}_R } |u(x)|^{p(x)}dx <1.
$
Therefore, by Proposition \ref{proposition.utendzero}, it follows that
\[
\|u\|_{L^{p(\cdot)}( \mathcal{C}_R )}
\le \left(\int_{ \mathcal{C}_R } |u(x)|^{p(x)}dx \right)^{1/p_+}
\quad \text{for all } R \ge R_0.
\]
Then,  by passing to the limit as $R\to\infty$, we conclude that
$
\|u\|_{L^{p(\cdot)}( \mathcal{C}_R )} \to 0.
$
 \end{proof}

% \newpage
 %%%%%%%%%
\section{Proof of the main results}\label{sec:3}
 %%%%%%%%%
 
We start by gathering some preliminary results that will be used throughout the proofs.
 
%%%%%%%%%%%%%
\begin{proposition}\label{prop:pinplog}
Consider the variable exponent $p(\cdot)$ in  Theorem~\ref{thm:over92}. 
Then $p(\cdot) \in \mathcal{P}^{\log}(\mathbb{R}^3)$.
\end{proposition}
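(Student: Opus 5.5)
We need to check the two conditions of Definition \ref{globallogholder} for the exponent $p(\cdot)$ of Theorem~\ref{thm:over92}, with $p_\infty = 9/2$. Note first that $p(\cdot)\in\mathcal{P}(\mathbb{R}^3)$ with $p^-\ge 9/2>1$ and $p^+=6<\infty$. The upper bound holds because $p$ is radially decreasing and equals $6$ on $B(0,R_0)$.

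\emph{Local log-Hölder continuity.} Let $L$ be the Lipschitz constant of $p$, so $|p(x)-p(y)|\le L|x-y|$; in addition $|p(x)-p(y)|\le p^+-p^-\le 3/2$ always. We split into two cases.
\begin{itemize}
\item If $|x-y|\ge 1$, then $\log(e+1/|x-y|)\le \log(e+1)$, hence $|p(x)-p(y)|\le \tfrac32 \le \tfrac{(3/2)\log(e+1)}{\log(e+1/|x-y|)}$.
\item If $|x-y|<1$, put $t=|x-y|$. The function $t\mapsto t\log(e+1/t)$ is bounded on $(0,1)$, since it tends to $0$ as $t\to0$. Thus $L t\le \tfrac{C}{\log(e+1/t)}$ with $C=L\sup_{0<t<1}t\log(e+1/t)$.
\end{itemize}
Taking $C_1$ to be the maximum of the two constants gives \eqref{relation_loc_logholder}.

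\emph{Log-Hölder decay.} Again we split into two regions.
\begin{itemize}
\item For $|x|\ge R_0^2$, hypothesis (3) gives $|p(x)-\tfrac92|\le C/|x|$. Since $s\mapsto \log(e+s)/s$ is bounded on $[R_0^2,\infty)$ (indeed on $[1,\infty)$), we get $C/|x|\le C'/\log(e+|x|)$.
\item For $|x|<R_0^2$, we have $|p(x)-\tfrac92|\le \tfrac32\le \tfrac{(3/2)\log(e+R_0^2)}{\log(e+|x|)}$.
\end{itemize}
Taking $C_2$ as the larger constant yields \eqref{relation_decay_logholder}.

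There is no real obstacle here. The only point requiring care is the elementary boundedness of $t\log(e+1/t)$ near $0$ and of $\log(e+s)/s$ for large $s$. Both follow from standard limits, for instance via L'Hôpital's rule or the bound $\log(e+s)\le e+s$ type estimates. Combining the two parts gives $p(\cdot)\in\mathcal{P}^{\log}(\mathbb{R}^3)$.
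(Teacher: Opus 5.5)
Your proof is correct and follows essentially the same route as the paper: the decay condition with $p_\infty = 9/2$ is handled identically, splitting at $|x| = R_0^2$, using boundedness of $\log(e+s)/s$ on $[1,\infty)$ outside and the trivial bound $|p(x)-\tfrac92|\le \tfrac32$ inside. For the local condition you make explicit, through the split $|x-y|\ge 1$ versus $|x-y|<1$ and the boundedness of $t\log(e+1/t)$ on $(0,1)$, what the paper only asserts follows from the Lipschitz property together with $|p(x)-p(y)|\le \tfrac32$.
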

%%%%%%%%%%%%%

\begin{proof}
In the following we prove that $p(\cdot)$ is globally log-Hölder continuous.

\medskip

\noindent\emph{Local log-Hölder continuity.}
Since $ {9}/{2}\leq p(x)\leq 6$ for all $x\in \R^3$, it follows   that $|p(x)-p(y)|$ is bounded above by $3/2$. This,  combined with the fact that $p$ is Lipschitz, directly implies the local log-Hölder continuity property.  
 
\noindent\emph{Log-Hölder decay.} To begin, we set $
p_\infty := 9/2.$ First note that, if  $ |x| \geq R_0^2 $, we know, by hypothesis that $|p(x) - 9/2| \leq C/{|x|}$.
Then, since the scalar function $f(t) = {\log(e + t)}/{t}$, is continuous on $[1,+\infty)$ and $ \lim_{t \to \infty} f(t) = 0 $, which makes it in particular bounded, we can write  
\[
  \frac{C}{|x|} \leq \frac{C_2}{\log(e + |x|)},
\]
for a suitable choice of the constant $C_2>0$. 
This fact implies that 
\[
|p(x) - 9/2|  \leq \frac{C_2}{\log(e + |x|)}.
\]
On the other hand, if $ |x| < R_0^2 $, using the boundedness $|p(x) - 9/2| \leq \frac{3}{2}$, and the fact that $\log(e + |x|) \leq \log(e + R_0^2)$, we can write 
\[
|p(x) - 9/2| \leq \frac{3}{2} \cdot \frac{\frac{3}{2} \log(e + R_0^2)}{\log(e + |x|)}.
\]
Thus, considering this information we prove the log-Hölder decay and we conclude the proof.
\end{proof}

At this point,  it is useful to recall that, $p_\Omega(\cdot)$ denotes the variable exponent restricted to a domain $\Omega \subset \Rt$, \emph{i.e.} $p_\Omega(\cdot)=  {p} (\cdot)_ { |_{\Omega} }$, and  
\[
p_{\Omega}^-={\mbox{inf ess}}_{x\in \Omega} \ p(x) \qquad \mbox{and}\qquad p_{\Omega}^+
=
 {\text{sup ess}}_{x\in \Omega} \ p(x)
.
\]
\begin{lemma}\label{lem:PCsup_asymptotics}
Let $p(\cdot)$ be the variable exponent defined in Theorem \ref{thm:over92} and $R\geq 2R_0^2$. 
Then, 
there exists a sequence $(\varepsilon_R)$ and $C>0$ such that
\[
p_{\mathcal{C}_R}^+ = \frac{9}{2} + \varepsilon_R, 
\quad 0<\varepsilon_R \le \frac{2C}{R}<1.
\]
In particular,
$\label{O1R}
p_{\mathcal{C}_R}^+ =  {9}/ {2} + O\!\left( {1} / {R}\right).
$
\end{lemma}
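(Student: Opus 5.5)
The plan is to read off the supremum of $p$ on the annulus from the radial monotonicity and the decay hypothesis (3) of Theorem~\ref{thm:over92}.

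\emph{Step 1: locate the supremum.} Since $p$ is radially decreasing and continuous (it is Lipschitz), its supremum over $\mathcal{C}_R=\{R/2<|x|<R\}$ is approached as $|x|\downarrow R/2$. We therefore set
\[
p^+_{\mathcal{C}_R}=\lim_{|x|\downarrow R/2}p(x)=p\big|_{|x|=R/2},
\]
and define $\varepsilon_R:=p^+_{\mathcal{C}_R}-\tfrac92$.

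\emph{Step 2: upper bound.} The assumption $R\ge 2R_0^2$ gives $R/2\ge R_0^2$, so every point with $|x|=R/2$ lies in the region where hypothesis (3) applies. Using continuity up to the sphere, this yields
\[
\varepsilon_R\le \frac{C}{R/2}=\frac{2C}{R}.
\]
To see that $2C/R<1$, use $C<R_0^2$ and $R\ge 2R_0^2$:
\[
\frac{2C}{R}<\frac{2R_0^2}{2R_0^2}=1.
\]
The conclusion $p^+_{\mathcal{C}_R}=\tfrac92+O(1/R)$ then follows immediately.

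\emph{Step 3: positivity, the main obstacle.} Hypothesis (2) only gives $p\ge 9/2$, hence $\varepsilon_R\ge0$. Strict positivity does not follow from the stated hypotheses: an exponent with $p\equiv 9/2$ for large $|x|$ satisfies all of them. I would handle this in one of two ways.
\begin{enumerate}
\item Interpret $\varepsilon_R>0$ as holding whenever $p>9/2$ on the sphere $|x|=R/2$. Since $p$ is decreasing, this is equivalent to $p$ not having reached $9/2$ by that radius, which is the situation of the explicit exponent in Theorem~\ref{thmpexplicit}.
\item Weaken the claim to $0\le\varepsilon_R$. Only the upper bound $\varepsilon_R\le 2C/R$ is needed later, to control factors such as $R^{\varepsilon_R}$ or $|\mathcal{C}_R|^{1/p^+}$ uniformly in $R$, so nothing downstream would be lost.
\end{enumerate}
The constant $C$ in the statement is the one from hypothesis (3). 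If $C=0$ that constant is $0$, but any positive replacement still works, since the bound is only used as an upper estimate.
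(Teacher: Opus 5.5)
Your argument matches the paper's: you identify $p^+_{\mathcal{C}_R}$ with the radial profile evaluated at $R/2$, then apply hypothesis (3) at radius $R/2\ge R_0^2$ to get $|\varepsilon_R|\le 2C/R<1$ (the paper writes out the positive-measure argument that the essential supremum equals $\tilde p(R/2)$, which your appeal to continuity covers). Your caveat is also correct: the paper's proof never justifies $\varepsilon_R>0$, nor $C>0$ when hypothesis (3) holds with $C=0$, and it only yields $0\le\varepsilon_R\le 2C/R$, which is all that Lemma~\ref{lem:exponent_control} and Proposition~\ref{prop:main_limit_thmover} use.
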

\begin{proof}
Let $x\in \mathcal{C}_R$. 
Since $p(\cdot)$ is continuous and radially decreasing,  
there exists a decreasing and continuous function
$\tilde{p} : [0, \infty) \to [9/2, 6]$, 
such that $ p(x) = \tilde{p}(|x|) $ for all $ x \in \mathbb{R}^3 $.
Then, for each $ x \in \mathcal{C}_R $ we have $|x| > R/2$, hence, 
\[
p(x) = \tilde{p}(|x|) \leq \tilde{p}(R/2) %= p(R/2).
,
\]
and we conclude 
\[
\text{ess sup}_{x \in \mathcal{C}_R} p(x) \leq  \tilde{p}(R/2) 
.
\] 
Let $ \epsilon > 0 $. By continuity of $ \tilde{p} $ at $ R/2 $, there exists $ \delta > 0 $ such that, 
$
r \in (R/2, R/2 + \delta) $ implies $|\tilde{p}(r) - \tilde{p}(R/2)| < \epsilon
,
$
in particular,  if
$
r \in (R/2, R/2 + \delta) $, then  $\tilde{p}(r) >\tilde{p}(R/2) - \varepsilon.$
To continue, we define the set
\[
A_\delta := \{x \in \mathbb{R}^3 : R/2 < |x| < R/2 + \delta\}.
\]
At this point we must stress the fact that, $ A_\delta \subset \mathcal{C}_R $ and, for all $ x \in A_\delta $, and we can write
\[
p(x) > \tilde{p}(R/2) - \epsilon.
\]
Moreover, $ A_\delta $ has positive Lebesgue measure, since
\[
| A_\delta | 
=
 \frac{4\pi}{3} ((R/2 + \delta)^3 - (R/2)^3) > 0.
\]
As a consequence of this, we get that the set $
\{x \in \mathcal{C}_R : p(x) > \tilde{p}(R/2) - \varepsilon\}$ has positive measure. Then, considering the definition of essential supremum, we can write $\operatorname{ess\,sup}_{x \in \mathcal{C}_R} p(x) \geq \tilde{p}(R/2) - \epsilon$. Thus, since  $ \varepsilon > 0 $ is arbitrary, we conclude
\[
\operatorname{ess\,sup}_{x \in \mathcal{C}_R} p(x) \geq \tilde{p}(R/2),
\]
and then, 
\[
p_{\mathcal{C}_R}^+ = \text{ess sup}_{x \in \mathcal{C}_R} p(x) =\tilde{p}(R/2).
\]
Now, to conclude  the asymptotic estimate, note that  
since $ R \geq 2R_0^2 $, we have $ R/2 \geq R_0^2 $, and by hypothesis, we know that there exist $C<R^2_0\leq R/2$ such that 
\[
\left| \tilde p(R/2) - \frac{9}{2} \right| \leq \frac{C}{R/2} = \frac{2C}{R}<1.
\]
Thus, defining %, for instance,  
  $\epsilon_R := \tilde p(R/2) - \frac{9}{2}$ 
%we obtain
%\[
%p_{\mathcal{C}_R}^+ = \frac{9}{2} + \varepsilon_R, \quad |\varepsilon_R| \leq \frac{2C}{R}<1,
%\]
%and
 we deduce the desired results. 
\end{proof}

\begin{lemma}\label{lem:exponent_control}
Under the assumptions of Lemma~\ref{lem:PCsup_asymptotics}, define $\alpha_R := 2 - \frac{9}{P_{\mathcal{C}_R}^+}.$ 
Then
\[
\alpha_R = O\!\left(\frac{1}{R}\right),
\]
and moreover
\[
R^{\alpha_R} = 1 + O\!\left(\frac{\ln R}{R}\right).
\]
\end{lemma}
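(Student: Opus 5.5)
The plan is to reduce everything to Lemma~\ref{lem:PCsup_asymptotics}, which gives $P_{\mathcal{C}_R}^+ = \frac92 + \varepsilon_R$ with $0<\varepsilon_R\le 2C/R<1$ for $R\ge 2R_0^2$. The first step is to write $\alpha_R$ explicitly:
\[
\alpha_R = 2 - \frac{9}{\frac92+\varepsilon_R} = \frac{9+2\varepsilon_R-9}{\frac92+\varepsilon_R} = \frac{2\varepsilon_R}{\frac92+\varepsilon_R}.
\]
Since $\varepsilon_R>0$, the denominator is at least $9/2$. Hence
\[
0<\alpha_R\le \tfrac{4}{9}\varepsilon_R \le \tfrac{8C}{9R}.
\]
This gives $\alpha_R=O(1/R)$, and also the sign information $\alpha_R>0$, which will be convenient.

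For the second claim I will write $R^{\alpha_R}=\exp(\alpha_R\ln R)$ and set $t_R:=\alpha_R\ln R$. By the first step, $0< t_R\le \frac{8C}{9}\frac{\ln R}{R}$. Since $\ln R/R$ is bounded for $R\ge 1$ (its maximum is $1/e$), $t_R$ stays in a fixed bounded interval $[0,T]$ with $T=\frac{8C}{9e}$. On such an interval the mean value theorem (or convexity of $\exp$) gives
\[
0\le e^{t}-1\le e^{T}\,t \qquad (0\le t\le T).
\]
Therefore
\[
0\le R^{\alpha_R}-1\le e^{T}\,\tfrac{8C}{9}\,\tfrac{\ln R}{R},
\]
which is exactly $R^{\alpha_R}=1+O(\ln R/R)$, with the implicit constant depending only on $C$ (hence on $p$ and $R_0$).

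There is no real obstacle here. The only point needing care is making the $O(\cdot)$ constants uniform. This means using the bound $\varepsilon_R\le 2C/R$ from Lemma~\ref{lem:PCsup_asymptotics}, valid for all $R\ge 2R_0^2$, together with the global boundedness of $\ln R/R$, rather than a purely asymptotic Taylor expansion. The degenerate case $C=0$ is harmless: it forces $\varepsilon_R=0$ and $\alpha_R=0$, so both statements hold trivially.
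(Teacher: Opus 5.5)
Your proof is correct and follows the same route as the paper: you bound $\alpha_R$ by a constant multiple of $\varepsilon_R\le 2C/R$ from Lemma~\ref{lem:PCsup_asymptotics}, then write $R^{\alpha_R}=\exp(\alpha_R\ln R)$ and use $e^s-1=O(s)$. The only difference is that you use the exact identity $\alpha_R=2\varepsilon_R/(\frac92+\varepsilon_R)$ and an explicit mean-value bound in place of the paper's Taylor expansions, which makes the constants uniform for all $R\ge 2R_0^2$ and needs only $\varepsilon_R\ge 0$, not strict positivity.
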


\begin{proof}
By Lemma~\ref{lem:PCsup_asymptotics}, we know that 
\[
p_{\mathcal{C}_R}^+ = \frac{9}{2} + \varepsilon_R, \quad \varepsilon_R = O\!\left(\frac{1}{R}\right), \quad \varepsilon_R  <1.
\]
Provided with this, we can write  
\[
\frac{9}{p_{\mathcal{C}_R}^+}
= \frac{9}{\frac{9}{2} + \varepsilon_R}
= \frac{2}{1 + \frac{2\varepsilon_R}{9}}.
\]
Using Taylor expansion for $t= {2\varepsilon_R}/ {9}<1$, we can write
\[
\frac{1}{1+t} = 1 - t + O(t^2),
\]
and we obtain
\[
\frac{9}{p_{\mathcal{C}_R}^+}
= 2 \left( 1 - \frac{2\varepsilon_R}{9} + O(\varepsilon_R^2) \right)
= 2 - \frac{4\varepsilon_R}{9} + O(\varepsilon_R^2).
\]
Hence, we conclude 
\[
\alpha_R 
=
 2 - \frac{9}{p_{\mathcal{C}_R}^+}
= \frac{4\varepsilon_R}{9} + O(\varepsilon_R^2)
= O\!\left(\frac{1}{R}\right).
\]
Now, considering the identity 
\[
R^{\alpha_R} = \exp(\alpha_R \ln R),
\]
and the fact that $\alpha_R = O(1/R)$, we obtain  
\[
\alpha_R \ln R = O\!\left(\frac{\ln R}{R}\right) \to 0
\quad  \text{ as } R \to \infty.
\]
Thus, by  considering the expansion $e^s -1=  O(s)$ as $s \to 0$ (with $s=\alpha_R \ln R$),  we can write 
\[
R^{\alpha_R} = 1 + O\!\left(\frac{\ln R}{R}\right).
\]
With this we finish the proof. 
\end{proof}

\begin{proposition}\label{prop:main_limit_thmover}
Let $p(\cdot)$ satisfy the assumptions of Lemma~\ref{lem:PCsup_asymptotics}, and let
$f:[1,\infty) \to \mathbb{R}$ be such that $
f(R) \to 0 \,  \text{ as } R \to \infty.$ 
Then
\[
\lim_{R\to\infty} R^{2 - \frac{9}{P_{\mathcal{C}_R}^+}} f(R) = 0.
\]
\end{proposition}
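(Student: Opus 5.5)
The plan is to reduce the claim directly to Lemma~\ref{lem:exponent_control}. By that lemma, with $\alpha_R := 2 - \frac{9}{p_{\mathcal{C}_R}^+}$, we have $R^{\alpha_R} = 1 + O\!\left(\frac{\ln R}{R}\right)$ for $R \geq 2R_0^2$. This gives constants $C'>0$ and $R_1 \ge 2R_0^2$ such that
\[
\bigl| R^{\alpha_R} - 1 \bigr| \le C' \frac{\ln R}{R} \qquad \text{for all } R \ge R_1 .
\]
Since $\frac{\ln R}{R} \to 0$, after enlarging $R_1$ if necessary we get $0 \le R^{\alpha_R} \le 2$ for $R \ge R_1$. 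The lower bound is automatic because $R^{\alpha_R}>0$; note also that $\alpha_R>0$ by Lemma~\ref{lem:PCsup_asymptotics}, since $\varepsilon_R>0$.

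Then, for $R \ge R_1$,
\[
\bigl| R^{2-\frac{9}{p_{\mathcal{C}_R}^+}} f(R) \bigr| = R^{\alpha_R}\, |f(R)| \le 2\, |f(R)| ,
\]
and the right-hand side tends to $0$ by the hypothesis $f(R)\to 0$. The squeeze theorem gives the claimed limit. One could equally write $R^{\alpha_R} f(R) = f(R) + (R^{\alpha_R}-1) f(R)$: the first term tends to zero, and the second is bounded by $C' \frac{\ln R}{R}\,|f(R)|$, which also tends to zero.

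There is no real obstacle here; the only points needing care are minor. First, Lemma~\ref{lem:exponent_control} requires $R \ge 2R_0^2$, so the limit is only considered along $R \ge \max\{1, 2R_0^2\}$; this does not affect the limit as $R\to\infty$. Second, the $O(\cdot)$ statement must be unpacked into an explicit bound valid for all sufficiently large $R$. That step relies only on $\alpha_R \ln R \to 0$ and the elementary inequality $e^s \le 1 + 2s$ for small $s \ge 0$, both already established in the proof of Lemma~\ref{lem:exponent_control}.
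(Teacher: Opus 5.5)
Your proof is correct and is essentially the paper's argument: the paper writes $R^{\alpha_R}=1+\delta_R$ with $\delta_R=O(\ln R/R)$ bounded and splits $R^{\alpha_R}f(R)=f(R)+\delta_R f(R)$, which is exactly your alternative decomposition, and your bound $R^{\alpha_R}\le 2$ for large $R$ is the same observation phrased as a squeeze. Your side remark that $\alpha_R>0$ is not needed, and the hypothesis $p\ge 9/2$ only yields $\alpha_R\ge 0$; positivity of $R^{\alpha_R}$ is all your argument uses.
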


\begin{proof}
By Lemma~\ref{lem:exponent_control}, we can write
\[
R^{2 - \frac{9}{P_{\mathcal{C}_R}^+}} = R^{\alpha_R}
= 1 + \delta_R,
\]
where $\delta_R = O\!\left(\frac{\ln R}{R}\right)$.
% and, in particular, $\delta_R \to 0$ as $R \to \infty$. Hence, $(\delta_R)$ is bounded for $R$ large enough.
Therefore,  we get 
\[
R^{2 - \frac{9}{P_{\mathcal{C}_R}^+}} f(R)
= (1 + \delta_R) f(R)
= f(R) + \delta_R f(R).
\]
Now, since $f(R) \to 0$ and $(\delta_R)$ is bounded for $R\geq 1$, we have
\[
|\delta_R f(R)| \le C |f(R)| \to 0 \quad \text{as } R \to \infty.
\]
Thus, we obtain
\[
R^{2 - \frac{9}{P_{\mathcal{C}_R}^+}} f(R) \to 0 \quad \text{as } R \to \infty
.
\]
\end{proof}

\begin{proof}[Proof of Theorem \ref{thm:over92}]
 
   Let  $\vu\in \dot H^1(\R^3) %L^2_{loc}(\R^3 )
   $ be a solution of the 3d stationary Navier-Stokes equations. By considering  Theorem 2.51 in \cite{CRUZ}, we know the inclusions
$$L^{p(\cdot)}(\R^3 )\subset L^{p^-}(\R^3 )+L^{p^+}(\R^3 )
\subset L^{p^-}_{loc}(\R^3 )+L^{p^+}_{loc}(\R^3 )
.$$
Now, by the hypothesis assumed on the variable exponent $p(\cdot)$ we  have $3<9/2\leq p^-\leq p^+\leq 6 $, and  then,  we can deduce  
$$u\in L^{p(\cdot)}(\R^3 ) \subset L^3_{loc}(\R^3 ).$$
Thus, by Theorem X.1.1 in \cite{galdi2011introduction} we conclude that $(\vu,P)$ is in fact a couple of regular functions. 
Now, let $\phi\in {\mathcal{C}}^\infty_0 (\mathbb{R}^3) $ be a smooth function such that $0\leq \phi \leq 1$, $\phi(x)=1$ if $|x|<\frac{1}{2}$, $\phi(x)=0$ if $|x|>1$. Given $R>1$, we consider the function $\phi_R(x)=\phi( {x}/ {R})$. Thus, $\phi_R(x)=1$ on $\{|x|<   R / 2 \}$ and $\phi_R(x)=0$ on $\{|x|\geq R \}$.
By testing the stationary Navier-Stokes equations \eqref{SNS} with $\phi_R \vu$ and since  $\supp (\phi_R \vu)\subset B_R=B(0,R)$,  we get 
\begin{equation}\label{eq. 3_92_exterior_bola}
\int_{B_R}-\Delta \vu \cdot\left(\phi_R \vu\right)+(\vu \cdot \vn) \vu \cdot\left(\phi_R \vu\right)+\vn P \cdot\left(\phi_R \vu\right) d x=0.
\end{equation}
Note that, since the couple $(\vu,P)$ is regular, the terms involved in the equality above are well-defined. Then, by using the divergence-free condition $\nabla \cdot \vu=0$ and integration by parts, we obtain 
\begin{multline}
0=
\int_{B_R}-\Delta \vu \cdot\left(\phi_R \vu\right) dx
+
\int_{B_R}(\vu \cdot \vn) \vu \cdot\left(\phi_R \vu\right) dx
+
\int_{B_R} \vn P \cdot\left(\phi_R \vu\right) d x 
\\
= 
-\int_{B_R} \Delta \phi_R\left(\frac{|\vu|^2}{2}\right) d x+\int_{B_R} \phi_R|\vn \otimes \vu|^2 dx
-\int_{B_R}\vn\phi_R \cdot\left(\frac{|\vu|^2}{2} \vu\right) d x
-\int_{B_R} \vn \phi_R \cdot(P \vu)dx
.
\end{multline}
Thus, we get the identity 
\begin{equation*}
\int_{B_R} \phi_R|\vn \otimes \vu|^2 dx=\int_{B_R} \Delta \phi_R \frac{|\vu|^2}{2} dx+
\int_{B_R} \vn \phi_R \cdot \left(P+\frac{|\vu|^2}{2}\right) \vu dx.
\end{equation*}
Then, 
considering that $\phi_R(x)= 1$ if $|x|< \frac R 2$, we get 
\begin{equation}\label{ineqBase92exteriorbola}
\int_{B_{\frac{R}{2}}}|\vn \otimes \vu|^2 dx \leq \int_{B_R} \Delta \phi_R \frac{|\vu|^2}{2} dx+ \int_{B_R} \vn \phi_R \cdot \left(P+\frac{|\vu|^2}{2}\right) \vu dx 
=: 
I_1(R)+I_2(R)
. 
\end{equation}
In  the following we will prove 
%\begin{equation}\label{LimitsAlphaBeta_92_exterior_bola}\displaystyle
$\lim_{R\to +\infty}  |I_1(R)|=\lim_{R\to +\infty}  |I_2(R)|=0.$
%\end{equation}
%In fact: 
%%%%%%%%%%%%%%%%%%%%%%

\noindent {\bf 1) Limit for $I_1(R)$}. For studying the term $I_1(R)$ in \eqref{ineqBase92exteriorbola}, the H\"older inequality with 
\begin{equation}\label{variable_exponents_1_92_exterior_bola}
1=\frac{2}{p(\cdot)}+\frac{1}{q(\cdot)}
\end{equation} yields the following estimate
\footnote{Considering the definition of the cut-off function  $\phi_R$   and Lemma  \ref{lemmaLinftyLpvariable} is straightforward to see that such functions and its partial derivatives belongs to the variable Lebesgue spaces considered here.} 
\begin{equation}\label{18aveq32_T1_92_exterior_bola}
|I_1(R)|\leq 
C\|\Delta \phi_R \|_{L^{q(\cdot)}(  \mathcal{C}_R )}\|\vu\|_{L^{p(\cdot)}(  \mathcal{C}_R )}^2.
\end{equation}
 In order to control the quantity $\|\Delta \phi_R \|_{L^{q_{ \mathcal{C}_R }(\cdot)}( \mathcal{C}_R )}$ above,
  by  Lemma \ref{lemmaLinftyLpvariable} we can write
\begin{equation}\label{EstimationNormeLinfiniTheta_R1_92_exterior_bola}
\|\Delta \phi_R \|_{L^{q_{ \mathcal{C}_R }(\cdot)}( \mathcal{C}_R )}\leq C\|\Delta \phi_R  \|_{L^{\infty}  ( \mathcal{C}_R )}\|1\|_{L^{q_{ \mathcal{C}_R }(\cdot)}( \mathcal{C}_R )}.
\end{equation}
Now, considering  the definition of  $\phi_R$,  we get 
$$
\|\Delta \phi_R  \|_{L^{\infty}
(  \mathcal{C}_R  )
}\leq CR^{-2}$$
 and we obtain
$$\|\Delta \phi_R \|_{L^{q_{
 \mathcal{C}_R  }
(\cdot)}( \mathcal{C}_R  )}\leq C R^{-2}\|1\|_{L^{q_{ \mathcal{C}_R  }(\cdot)}( \mathcal{C}_R  )}.$$
Then, by considering Lemma \ref{PropositionLpplusminus} to dealing with $\|1\|_{L^{q_{ \mathcal{C}_R }(\cdot)}( \mathcal{C}_R )}$, we can write
\begin{align}\label{EstimationC01_92_exterior_bola}
\|\Delta \phi_R \|_{L^{q_{ \mathcal{C}_R }(\cdot)}( \mathcal{C}_R )}&\leq
 C R^{-2}\max \{| \mathcal{C}_R |^{\frac 3 {q_{ \mathcal{C}_R }^- }}, | \mathcal{C}_R |^{\frac 3 {q_{ \mathcal{C}_R }^+}}\}
 .
\end{align}
Now, by stressing the fact   that 
$|
 \mathcal{C}_R 
|=CR^3$ and $R>1$, we obtain
\begin{align}\label{EstimationC1_92_exterior_bola}
\|\Delta \phi_R \|_{L^{q_{ \mathcal{C}_R }(\cdot)}( \mathcal{C}_R )}&\leq
 C\max \{R^{-2+\frac 3{q_{ \mathcal{C}_R }^-}}, R^{-2+\frac 3 {q_{ \mathcal{C}_R }^+}}\}
=
  C R^{-2+\frac 3{q_{ \mathcal{C}_R }^-}} 
  .
\end{align}
and thus, we get  
\begin{equation}\label{estimateI1_92_exterior_bola}
|I_1(R)|\leq 
  C R^{-2+\frac 3{q_{ \mathcal{C}_R }^-}} 
\|\vu\|_{L^{p(\cdot)}(  \mathcal{C}_R )}^2.
\end{equation}
Then, by mean of \eqref{variable_exponents_1_92_exterior_bola} we can recast the previous expression as 
\begin{equation}\label{estimateI122}
|I_1(R)|\leq 
  C R^{1-\frac 6{p_{ \mathcal{C}_R }^+}} 
\|\vu\|_{L^{p(\cdot)}(  \mathcal{C}_R  )}^2.
\end{equation}
Now, stressing the fact that $p_{ \mathcal{C}_R }^+ \leq p^+=6 $, $f(s)=R^s$ is an increasing function if $R>1$, and $\|\vu\|_{L^{p(\cdot)}(  \mathcal{C}_R  )}\to 0$ as  $R\to +\infty$ (see Lemma \ref{Lemma.utendzero}), we conclude 
$
I_1(R) \to_{R\to +\infty} 0. 
$

\noindent {\bf 2) Limit for $I_2(R)$}. Note that, by mean of  the definition of $\phi_R$ we know that $\supp( \vn \phi_R)\subset \mathcal{C}_R $. Thus, we can write 
\begin{eqnarray}
\left|I_2(R)\right|&=&\left|\int_{B_R} \vn \phi_R \cdot \left(P+\frac{|\vu|^2}{2}\right) \vu dx\right|\notag\\
& \leq &\frac{1}{2}  
\int_{ \mathcal{C}_R }|\vn \phi_R||\vu|^3dx + \int_{ \mathcal{C}_R }|\vn \phi_R||P \| \vu| dx 
=: I_{21}(R)+I_{22}(R)
.\label{aug_31T1_92_exterior_bola}
\end{eqnarray}
With this at hand, in the following our objective is to prove 
$$
\displaystyle \lim_{R\to +\infty}I_{21}(R)= \lim_{R\to +\infty} I_{22}(R)=0.
$$
To deal with the term $I_{21}(R)$, by the H\"older inequality with 
\begin{equation}\label{variable_exponents_2_92_exterior_bola}
1=\frac{3}{p(\cdot)}+\frac{1}{r(\cdot)}
\end{equation}
we write 
\begin{align}
I_{21}(R)=\int_{ \mathcal{C}_R }|\vn \phi_R||\vu|^3dx 
&\leq C
\|\vn\phi_R \|_{L^{ r(\cdot)}( \mathcal{C}_R )}\| |\vu|^3 \|_{L^{\frac{p(\cdot)}{3}}( \mathcal{C}_R )},
\\
&\leq C\|\vn\phi_R \|_{L^{ r(\cdot)}( \mathcal{C}_R )}\|\vu \|_{L^{p(\cdot)}( \mathcal{C}_R )}^3.\label{2mayeq3T1_92_exterior_bola}
\end{align}
Since $ \|\vn \phi_R  \|_{L^{\infty}}\leq CR^{-1}$ and $R>1$, following the same ideas than before, we obtain  
\begin{align}
\| \vn \phi_R \|_{L^{r(\cdot)}( \mathcal{C}_R )}
&\leq
 C\max \{R^{-1+\frac 3{r_{ \mathcal{C}_R }^-}}, R^{-1+\frac 3 {r_{ \mathcal{C}_R }^+}}\} 
=
 C R^{-1+\frac 3{r_{ \mathcal{C}_R }^-}}
 ,
  \label{EstimationBeta1C_92_exterior_bola}
\end{align}
and then we get 
\begin{equation}\label{estimateI21_92_exterior_bola}
I_{21}(R)\leq 
  C R^{-1+\frac 3{r_{ \mathcal{C}_R }^-}}
\|\vu \|_{L^{p(\cdot)}(\R^3 )}^3. 
\end{equation}
Thus, by mean of \eqref{variable_exponents_2_92_exterior_bola}, we can write 
\begin{equation}\label{estimateI2122_92_exterior_bola}
I_{21}(R)\leq 
  C R^{2-\frac 9{p_{ \mathcal{C}_R }^+}}
\|\vu \|_{L^{p(\cdot)}(  \mathcal{C}_R  )}^3. 
\end{equation}
\begin{remark}\label{remark_92_exterior_bola_2}
Then, since  
$\|\vu\|_{L^{p(\cdot)}(  \mathcal{C}_R  )}\to 0$ as  $R\to +\infty$, by considering Proposition \ref{prop:main_limit_thmover} we obtain
$$\lim_{R\to +\infty} R^{2-\frac 9{p_{ \mathcal{C}_R }^+}}
\|\vu \|_{L^{p(\cdot)}(  \mathcal{C}_R  )}^3 = 0.$$
\end{remark}
\noindent Thus, using this remark we conclude  $
I_{21}(R) \to_{R\to +\infty} 0. $

%%%%%
Now we analyze the term $I_{22}(R)$. Considering H\"older inequalities with $\frac{1}{p(\cdot)}+\frac{2}{p(\cdot)}+\frac{1}{r(\cdot)}=1$ and arguing for the cut-off function in the same manner than before, we  get the estimates 
\begin{align}
I_{22}(R)=\int_{ \mathcal{C}_R }|\vn \phi_R||P \| \vu| dx
&\leq C \| \vn\phi_R \|_{L^{r(\cdot)}( \mathcal{C}_R )} \|P \|_{L^{\frac{p(\cdot)}{2}}( \mathcal{C}_R )}\|\vu \|_{L^{p(\cdot)}( \mathcal{C}_R )} 
\\
&\leq 
 C R^{-1+\frac 3{r_{ \mathcal{C}_R }^-}}
\|P \|_{L^{\frac{p(\cdot)}{2}}( \mathcal{C}_R )}\|\vu \|_{L^{p(\cdot)}(  \mathcal{C}_R )}
\\
&=
C R^{2-\frac 9{p_{ \mathcal{C}_R }^+}}
\|P \|_{L^{\frac{p(\cdot)}{2}}( \R^3)}
\|\vu \|_{L^{p(\cdot)}(  \mathcal{C}_R )}\label{estimateI22_92_exterior_bola}
.
\end{align}
Now, in order to get 
the limit for $I_{22}$, we stress the fact that,  $p(\cdot)\in\mPl(\R^3) $ by Proposition \ref{prop:pinplog}  
 and, by using the divergence-free property of $\vu$, we have the classical relationship for $P$: 
$$\displaystyle{P=\sum_{i,j=1}^3\mathcal{R}_i\mathcal{R}_i(u_iu_j)}
,
$$ 
where the $\mathcal{R}_i$ stands for the   Riesz transforms.
Then,
gathering this relationship with the hypothesis $\vu\in L^{p(\cdot)}(\R^3 )$ and the fact that 
 the Riesz transform are bounded and continuous in $L^{p(\cdot)}$ spaces provided that $p(\cdot)\in\mPl(\R^3) $ (see for instance \cite[Section 12.4]{Diening_Libro}),  we conclude 
 \begin{equation}
 \|P \|_{L^{\frac{p(\cdot)}{2}}( \R^3)}
 \leq C 
 \|\vu \|^2_{L^{p(\cdot)}( \R^3 )}
 .
 \end{equation} 
 Considering this last inequality into \eqref{estimateI22_92_exterior_bola}, we obtain 
 \begin{align}
I_{22}(R)\leq 
C R^{2-\frac 9{p_{ \mathcal{C}_R }^+}}
 \|\vu \|^2_{L^{p(\cdot)}( \R^3 )}
\|\vu \|_{L^{p(\cdot)}(  \mathcal{C}_R )}
.
\end{align}
Thus, considering Remark \ref{remark_92_exterior_bola_2},  we conclude 
$
I_{22}(R) \to_{R\to +\infty} 0. 
$
%\end{itemize}%of the \begin{itemize} for the Limits

With this information at hand, we can conclude from the estimate (\ref{ineqBase92exteriorbola}): 
\begin{equation}\label{LimiteSobolev_92_exterior_bola}
\lim_{R\to +\infty}\int_{B_{\frac{R}{2}}}|\vn \otimes \vu|^2 dx=\|\vu\|_{\dot{H}^1(\R^3)}=0,
\end{equation}
from which we deduce, by considering Sobolev embeddings, that $\|\vu\|_{L^6}=0$ and thus  $\vu= 0$. 
\end{proof}

%\newpage

\begin{proof}[Proof of Theorem \ref{thmpexplicit}]

To begin, we define the variable exponent $p(\cdot)= {9} / {2} + \varepsilon(\cdot)$. 
Note  that we have $p(x)> {9} / {2}$ or all $x\in \R^3$,  and that $p$ is radial. Furthermore, it is straightforward to see that $p$ is Lipschitz (with Lipschitz constant $C_L = 3 / (2R_0) $).
 Now,  for $ |x| > R_0 $, the function $ \frac{R_0}{|x|} $ is  decreasing, hence $ p(\cdot) $ is  decreasing. On the other hand, we stress the fact that, for $ |x| \geq R_0^2 $, we can write 
\[
\left| p(x) - \frac{9}{2} \right| =
 \left| \frac{3}{2} \frac{R_0}{|x|} \right| = C |x|^{-1},
\]
where $ C = \frac{3}{2} R_0 <R_0^2$ (since $R_0>3/2$ by hypothesis).

With this information at hand, we note that this variable exponent $p(\cdot)$ fulfill the hypothesis of Theorem \ref{thm:over92}, then if the solution $u$ belongs to $L^{p(\cdot)} (\R^3)$, we conclude that $u=0$.
\end{proof}

%\newpage

%In the following Corollary we present a natural and nice consequence of  Theorem \ref{thm:over92}. 
 
 \begin{proof}[Proof of Corollary \ref{corollary-localized-assumption}]
In the folllowing, we will show that $
u \in L^{\frac{9}{2}+\varepsilon(\cdot)}(\mathbb{R}^3)$, 
where \( \varepsilon(\cdot) \) is as in Theorem~\ref{thmpexplicit}.
To this end, we decompose the domain as
\[
\mathbb{R}^3 = B(0,R_0) \cup \{ |x| \geq R_0 \}.
\]
Note that, since \( u \in \dot{H}^1(\mathbb{R}^3) \), the Sobolev embedding yields \( u \in L^6(\mathbb{R}^3) \), and in particular \( u \in L^6(B(0,R_0)) \).  
By definition, \( \varepsilon(x) = \frac{3}{2} \) for \( |x| < R_0 \), hence \( \frac{9}{2} + \varepsilon(x) = 6 \) in this region. 
Thus, it follows that $u \in L^{\frac{9}{2}+\varepsilon(\cdot)}(B(0,R_0)).$
On the other hand, by assumption of the corollary we have
$ u \in L^{\frac{9}{2}+\overline{\varepsilon}(\cdot)}(\{ |x| \geq R_0 \}),$
and by construction \( \overline{\varepsilon}(x) = \varepsilon(x) \) for \( |x| \geq R_0 \). Therefore, we can write $
u \in L^{\frac{9}{2}+\varepsilon(\cdot)}(\{ |x| \geq R_0 \}).
$
Then, by combining the two regions, we conclude 
$
u \in L^{\frac{9}{2}+\varepsilon(\cdot)}(\mathbb{R}^3).
$
Thus, as all the hypotheses of Theorem~\ref{thmpexplicit} are fulfilled, we apply it and we conclude \( u \equiv 0 \).
\end{proof}
 
\paragraph{\bf Acknowledgements} The author warmly thanks  Pierre-Gilles Lemarié-Rieusset, Hedong Hou and Alexey Cheskidov  for their valuable advice and insightful comments.
 
\paragraph{\bf Datasets} Data sharing does not apply to this article as no datasets were generated or analyzed during the
current study.
 
\paragraph{\bf  Conflict of interest} In addition, the author declares no conflict of interest and confirms being the sole contributor to this paper.
 
%------------------
%------ end of document 
%------------------
%\newpage


\begin{thebibliography}{99}
%\addcontentsline{toc}{section}{References}
\bibitem{chae14}
{\sc D.~Chae}, {\em Liouville-type theorems for the forced {E}uler equations
  and the {N}avier-{S}tokes equations}, Comm. Math. Phys., 326 (2014),
  pp.~37--48.
%
\bibitem{ChaeWolf}
{\sc D.~Chae and J.~Wolf}, {\em On {L}iouville type theorems for the steady
  {N}avier-{S}tokes equations in {$\mathbb{R}^3$}}, J. Differential Equations,
  261 (2016), pp.~5541--5560.
%
\bibitem{CV2023} D. Chamorro,  G. Vergara-Hermosilla, Liouville type theorems for stationary Navier-Stokes equations with Lebesgue spaces of variable exponent, Documenta Mathematica (2025).
%
\bibitem{CRUZ}
{\sc D.~V. Cruz-Uribe and A.~Fiorenza}, {\em Variable {L}ebesgue spaces:
Foundations and harmonic analysis}, Springer Science \& Business Media, 2013.
%
\bibitem{Diening_Libro}
{\sc L.~Diening, P.~Harjulehto, P.~H{\"a}st{\"o} and M.~Ruzicka}, {\em
Lebesgue and Sobolev spaces with variable exponents}, Springer, 2011.
%
\bibitem{galdi2011introduction}
{\sc G.~Galdi}, {\em An introduction to the mathematical theory of the
{N}avier-{S}tokes equations: Steady-state problems}, Springer Science \&
Business Media, 2011.
%
\bibitem{Kozonoetal}
{\sc H.~Kozono, Y.~Terasawa, and Y.~Wakasugi}, {\em A remark on
  {L}iouville-type theorems for the stationary {N}avier-{S}tokes equations in
  three space dimensions}, J. Funct. Anal., 272 (2017), pp.~804--818.
%
\bibitem{lemarie2016navier}
{\sc P.~G. Lemari{\'e}-Rieusset},
{\em The {N}avier-{S}tokes problem in the 21st century}, CRC press, 2016.
%
%
\bibitem{Lerner26}
{\sc N.~
Lerner},   {\em Wiener Algebras Methods for Liouville Theorems on the Stationary Navier-Stokes System}, arXiv preprint arXiv:2601.13916  (2026).
%
\bibitem{Ser2016}  
G. \textsc{Seregin}, \emph{A Liouville type theorem for steady-state Navier-Stokes equations}.  
J. {\'E}.D.P., Expos{\'e} no IX, (2016).
%
\bibitem{Seregin16}
{\sc G.~Seregin}, {\em Liouville type theorem for stationary {N}avier-{S}tokes
  equations}, Nonlinearity, 29 (2016), pp.~2191--2195.
%
\bibitem{Sereginetwang}
{\sc G.~Seregin and W.~Wang}, {\em Sufficient conditions on {L}iouville type
  theorems for the 3{D} steady {N}avier-{S}tokes equations}, Algebra i Analiz,
  31 (2019), pp.~269--278.
%
\end{thebibliography}
 \end{document}